\newtheorem{thm}{Theorem}[section]
\newtheorem{lemma}[thm]{Lemma}
\newtheorem{corollary}[thm]{Corollary}
\theoremstyle{definition}
\newtheorem{definition}[thm]{Definition}
\newtheorem{example}[thm]{Example}
\theoremstyle{remark}
\newtheorem{remark}[thm]{Remark}
\numberwithin{equation}{section}
\numberwithin{equation}{section}
\newcommand{\R}{\mathbb R}
\newcommand{\N}{\mathbb N}
\newcommand{\C}{\mathcal{C}}
\newcommand{\PP}{\mathcal{P}}
\newcommand{\ve}{\varepsilon}
\newcommand{\lam}{\lambda}
\def \N {\mathbb{N}}
\def \R {\mathbb{R}}
\def \div {\mathrm{div}}
\def \dist {\mathrm{dist}}
\def \suchthat {\ \big | \ }
\def \ve {\varepsilon}
\newcommand{\defeq}{\mathrel{\mathop:}=}
\begin{document}
	
\title[The $\infty$-{F}u{\v{c}}{\'{\i}}k spectrum]{The $\infty$-{F}u{\v{c}}{\'{\i}}k spectrum}

\author[J.V. da Silva, J.D. Rossi and A.M. Salort]{Jo\~{a}o V. da Silva, Julio D. Rossi and Ariel M. Salort}

\address{Departamento de Matem\'atica, FCEyN - Universidad de Buenos Aires and
\hfill\break \indent IMAS - CONICET
\hfill\break \indent Ciudad Universitaria, Pabell\'on I (1428) Av. Cantilo s/n. \hfill\break \indent Buenos Aires, Argentina.}

\email[J.D. Rossi]{jrossi@dm.uba.ar}
\urladdr{http://mate.dm.uba.ar/~jrossi}

\email[A.M. Salort]{asalort@dm.uba.ar}
\urladdr{http://mate.dm.uba.ar/~asalort}

\email[J. V. da Silva]{jdasilva@dm.uba.ar}

\subjclass[2010]{35B27, 35J60, 35J70}

\keywords{ {F}u{\v{c}}ik spectrum, Degenerate fully nonlinear elliptic equations, Infinity-Laplacian operator}

\begin{abstract}
In this article we study the behavior as $p \nearrow+\infty$ of the {F}u{\v{c}}ik spectrum for $p$-Laplace operator with zero Dirichlet boundary conditions in a bounded domain $\Omega\subset \R^n$. We characterize the limit equation,
and we provide a description of the limit spectrum. Furthermore, we show some explicit computations of the spectrum for certain configurations of the domain.
\end{abstract}	
\maketitle


\section{Introduction}

Given a bounded smooth domain $\Omega\subset \R^n$, $n\geq 1$, we are interested in studying the asymptotic behavior as $p\to \infty$ of the following non-linear eigenvalue problem
\begin{equation}\label{ecu}
\left\{
\begin{array}{ll}
   -\Delta_p u(x)  =  \alpha_p(u^+)^{p-1}(x)- \beta_p(u^-)^{p-1}(x) & \text{in }  \Omega \\
  u(x)  =  0 & \text{on }  \partial\Omega,
\end{array}
\right.
\end{equation}
where $\Delta_p u\defeq \div(|\nabla u|^{p-2}\nabla u)$ denotes the $p$-Laplace operator and $\alpha_p$ and $\beta_p$ are two real parameters. As usual, $u^\pm = \max\{0,\pm u\}$ mean the positive and negative parts of $u$.  Recall that the set
$$
  \Sigma_p \defeq \{(\alpha_p,\beta_p)\in \R^2 \colon \,\, \mbox{there exists a nontrivial solution } u \mbox{ of }\eqref{ecu} \}
$$
is currently known as the \textit{Fu{\v{c}}{\'{\i}}k spectrum} in honor to the Czech mathematician
Svatopluk Fu{\v{c}}{\'{\i}}k, who in the late '70s, studied this kind of equations  in one space dimension with periodic boundary conditions and their relationship with jumping nonlinearities. More precisely, in \cite{Fucik-libro} it was proved that $\Sigma_2$ for $\Omega = (a,b) \subset \R$ consists in two trivial lines and a family of hyperbolic-like curves passing thought the pairs $(\lam,\lam)$, being $\lam$ an eigenvalue of the (zero) Dirichlet Laplacian in the interval
$(a,b)$. Also, explicit formulas for such curves were found. When regarding the one-dimensional case for $p\neq 2$, the structure of the spectrum is similar, see for instance \cite{Dra-92}.
Throughout the last decades several works have been devoted to studying $\Sigma_p$ in $\R^n$ (for $n\geq 1$). The bibliography on this subject is vast. For the linear case, $p=2$, we refer to the reader the papers \cite{Cu-Go-92, DAN, Da-93, dF-Go-94, Fu-80, Mi-94,S}. When $p\neq 2$ we address, for instance, to references \cite{Cu-dF-Go-99,Cu-dF-Go-98, Pe-04,PS,PS2}.

Observe that problem \eqref{ecu} is closely related with the eigenvalue problem of the (zero) Dirichlet $p$-Laplacian, since, when both parameters $\alpha_p$ and $\beta_p$ are considered to be the same, \eqref{ecu} becomes
\begin{equation} \label{plap}
\left\{
\begin{array}{ll}
  -\Delta_p u(x)  =  \lam |u(x)|^{p-2}u(x) & \text{in }  \Omega \\
  u(x)  =  0 & \text{on }  \partial \Omega
\end{array}
\right.
\end{equation}
and it follows that the pair $(\lam_{k,p},\lam_{k,p})$ belongs to $\Sigma_p$ for each $k\in\N$, where $\lam_{k,p}$ denotes the $k$-th (variational) eigenvalue of \eqref{plap}. It is also straightforward to see that the trivial lines $\{\lam_{1,p}\}\times \R$ and $\R\times \{\lam_{1,p}\}$ belong to $\Sigma_p$. The following facts are well-known in the the literature, see \cite{Cu-Go-92, DAN, Da-93, dF-Go-94, Fu-80, Mi-94} and \cite{Cu-dF-Go-99,Cu-dF-Go-98, Pe-04}:  the trivial lines are isolated in the spectrum and curves in $\Sigma_p$ emanating from each pair $(\lam_{k,p},\lam_{k,p})$ exist locally. Moreover, it is proved that the spectrum contains a continuous non-trivial first curve passing though $(\lam_{2,p},\lam_{2,p})$, which is, in fact asymptotic to the trivial lines, and it admits a variational characterization.

Let us recall some important properties on the spectrum of the $p$-Laplacian. For problem \eqref{plap} there exists a sequence of eigenvalues tending to infinity (note that, in general, it is not known if such a
sequence constitutes the whole spectrum), that is, ($\lambda_{k,p})_{k \geq 1}$ such that there are nontrivial solutions to the problem \eqref{plap}, see \cite{GP1}. It is also known (cf. \cite{Anane}) that the first eigenvalue to \eqref{plap} is isolated, simple and can be variationally characterized as
\begin{equation}\tag{{\bf \text{Eigenv.}}}\label{eqEigen}
   \lambda_{1,p} (\Omega) = \inf_{u \in W^{1,p}_0 (\Omega) \setminus \{0\}} \frac{\| \nabla u\|^p_{L^p (\Omega)}}{\| u\|^p_{L^p (\Omega)}}.
\end{equation}

In the last three decades there was an increasing number of works concerning the study of limit for $p$-Laplacian type problems as $p \to +\infty$. In this direction, one of pioneering works is \cite{BdBM} where it was studied the limit of torsional creep type problems for the $p$-Laplacian, namely
$$
  -\Delta_p u_p(x)  =  1 \quad \text{in} \quad \Omega,
$$
obtaining as ``limit equation'' $|\nabla u| =  1 \quad \text{in} \quad \Omega$ (the well-known \textit{Eikonal equation}) in the viscosity sense. Moreover, $u(x) = \dist(x, \partial \Omega)$ is the corresponding limiting solution (we also recall that more general problems are studied there). On the other hand, regarding the so-called $\infty$-eigenvalue problem, the main reference is \cite{JLM}, where the authors proved that such a quantity is obtained as a limit of the first eigenvalue \eqref{eqEigen} in the following way
$$
	\lam_{1,\infty}(\Omega)=\lim_{p\to\infty} \lam_{1,p}^{1/p}(\Omega).
$$
An interesting piece of information is that such an $\infty$-eigenvalue admits a geometric characterization in terms of the radius of the biggest ball inscribed in $\Omega$:
\begin{equation} \label{lam1}
	\lam_{1,\infty}(\Omega)= \frac{1}{\mathfrak{r}}
\end{equation}
where $\mathfrak{r}(\Omega)=\max\limits_{x\in\Omega} \dist(x,\partial \Omega)$. Moreover, \cite{JLM} also establishes that, up to subsequences, as $p\to\infty$ in \eqref{plap}, uniform limits, $\displaystyle u(x) = \lim_{p \to \infty} u_p(x)$, satisfy the following limit equation
\begin{equation}\label{plap.infty}
\left\{
\begin{array}{ll}
  \min \{-\Delta_\infty u(x), |\nabla u(x)|-\lambda_{1,\infty}(\Omega) u(x)\}  =  0 & \text{in }  \Omega \\
  u(x)  =  0 & \text{on }  \partial \Omega
\end{array}
\right.
\end{equation}
in the viscosity sense, where
$$
   \displaystyle \Delta_\infty u(x) \defeq \sum_{i, j=1}^{N} \frac{\partial u}{\partial x_j}(x)\frac{\partial^2 u}{\partial x_j \partial x_i}(x) \frac{\partial u}{\partial x_i}(x)
$$
is the nowadays well-known \textit{Infinity-Laplacian operator}. Recall that solutions to \eqref{plap.infty} minimize
\begin{equation}\label{eqInfQuot}
  \frac{\|\nabla u\|_{L^{\infty}(\Omega)}}{\|u\|_{L^{\infty}(\Omega)}}
\end{equation}
over all function $W^{1, \infty}_0(\Omega)\setminus \{0\}$. In spite of the fact that the function $u(x) = \dist(x, \partial \Omega)$ minimizes \eqref{eqInfQuot}, it is not always a viscosity solution to \eqref{plap.infty} (cf. \cite{JLM} for more details). Thereafter, in \cite{Ju-Li-05} it is proved that the limit of the second eigenvalue of \eqref{plap} (note that such an eigenvalue is also variational) exists and is obtained as
$$
    \lam_{2,\infty}(\Omega)=\lim_{p\to\infty}\lam_{2,p}^{1/p}(\Omega).
$$
Furthermore, as before, this value also admits a geometric characterization given by
\begin{equation} \label{lam2}
	 \lam_{2,\infty}(\Omega)= \frac{1}{\mathfrak{R}}
\end{equation}
where
$$
   \mathfrak{R}(\Omega) = \sup \left\{r>0\colon \,\, \exists \,\, B_r^1, B_r^2 \subset \Omega \,\,\,\text{such that} \,\,\, B_r^1 \cap B_r^2 = \emptyset\right\}.
$$
In this case, a uniform limit to \eqref{plap} satisfies the following limit equation in the viscosity sense
$$
\left\{
\begin{array}{ll}
  \min\{-\Delta_{\infty}\, u(x), |\nabla u(x)|-  \lam_{2,\infty}(\Omega)u(x)\}  =  0 & \text{in }  \{u>0\} \cap \Omega \\
  \max\{-\Delta_{\infty}\,u(x), -|\nabla u(x)|-\lam_{2,\infty}(\Omega)u(x)\}  =  0 & \text{in }  \{u<0\} \cap \Omega \\
  -\Delta_{\infty}\,u(x) =  0 & \text{in }  \{u=0\} \cap \Omega \\
  u(x)  =  0 & \text{on }  \partial \Omega.
\end{array}
\right.
$$

Concerning limits of higher eigenvalues in \eqref{plap} we also refer to reader the article \cite{Ju-Li-05}. Despite the fact that has been proved in \cite{Ju-Li-05} that the set of such $\infty$-eigenvalues is unbounded, a geometric characterization beyond $\lam_{2,\infty}(\Omega)$ has not been achieved. However, when we bring to light the one-dimensional problem with $\Omega$ being the unit interval $(0,1)$, the spectrum is computed to be the sequence $\{\lam_{k,\infty}\}_{k\in\N}$ given by
\begin{equation} \label{lamk.1d}
\lam_{1,\infty}=k, \qquad \lam_{k,\infty}=2\lam_{1,\infty}, \quad k\in \N.
\end{equation}
For more results concerning the $\infty-$eigenvalue problem we refer to \cite{Champion, Crasta, Hynd, Navarro,Yu}
and references therein.

According to our knowledge, up to date, there is no investigation on the asymptotic behavior of the {F}u{\v{c}}{\'{\i}}k spectrum as $p$ diverges. Therefore, in this manuscript we will turn our attention in studying both the structure and characterization of the \textit{$\infty$-{F}u{\v{c}}{\'{\i}}k spectrum}. Furthermore, in some particular configurations of the domain $\Omega$, we are able to perform explicit computations of the spectrum.

In our first theorem we obtain the equation associated to the $\infty$-{F}u{\v{c}}{\'{\i}}k spectrum, which is obtained
letting $p\to \infty$ in equation \eqref{ecu}.

\begin{thm}\label{teo.eq}
Let $(\alpha_p, \beta_p)_{p> 1} \in \Sigma_p$ be such that $\alpha_p^{1/p}, \beta_p^{1/p}$ are bounded and $u_p \in W^{1, p}_0(\Omega)$ a corresponding eigenfunction normalized with $\|u_p\|_{L^p(\Omega)} = 1$. Then, up to a subsequence,
$$
(\alpha_{\infty}, \beta_{\infty}) = \lim_{p \to \infty} \left(\alpha_p^{1/p}, \beta_p^{1/p}\right) \qquad \text{ and } \qquad \lim_{p\to\infty} u_p(x) = u_\infty(x) \text{ uniformly in }\Omega.
$$
Moreover, the limit $u_{\infty}$ belongs to $W^{1,\infty}_0(\Omega)$ and is a viscosity solution to
\begin{equation} \label{ecu.infty}
\left\{
\begin{array}{ll}
  \min\{-\Delta_{\infty}\,u_{\infty}(x), |\nabla u_{\infty}(x)|-\alpha_{\infty} u_{\infty}^{+}(x)\}  =  0 & \text{in }  \{u_{\infty}>0\} \cap \Omega \\
  \max\{-\Delta_{\infty}\,u_{\infty}(x), -|\nabla u_{\infty}(x)|+\beta_{\infty} u_{\infty}^{-}(x)\}  =  0 & \text{in }  \{u_{\infty}<0\} \cap \Omega \\
  -\Delta_{\infty}\,u_{\infty}(x) =  0 & \text{in }  \{u_{\infty}=0\} \cap \Omega \\
  u_{\infty}(x)  =  0 & \text{on }  \partial \Omega.
\end{array}
\right.
\end{equation}
\end{thm}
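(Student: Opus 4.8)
The plan is to proceed in two stages: first extract the limit objects by compactness, then verify the limit equation by the standard stability argument for viscosity solutions under uniform convergence.

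First I would establish the compactness. Since $\|u_p\|_{L^p(\Omega)}=1$ and $(\alpha_p,\beta_p)\in\Sigma_p$, testing \eqref{ecu} against $u_p$ itself gives $\|\nabla u_p\|_{L^p(\Omega)}^p = \alpha_p\|u_p^+\|_{L^p(\Omega)}^p + \beta_p\|u_p^-\|_{L^p(\Omega)}^p \le \max\{\alpha_p,\beta_p\}$. Hence $\|\nabla u_p\|_{L^p(\Omega)} \le \max\{\alpha_p^{1/p},\beta_p^{1/p}\} =: C$, which is bounded uniformly in $p$ by hypothesis. A standard argument (as in \cite{JLM, Ju-Li-05}) then shows that for any fixed $q$, $\limsup_{p\to\infty}\|\nabla u_p\|_{L^q(\Omega)} \le C|\Omega|^{1/q}$ via Hölder, so $(u_p)$ is bounded in $W^{1,q}_0(\Omega)$ for every $q>n$; by Morrey's embedding the family is equibounded and equi-Hölder-continuous, and Arzel\`a--Ascoli together with a diagonal argument yields a subsequence with $u_p\to u_\infty$ uniformly in $\overline\Omega$ and $\nabla u_p\rightharpoonup \nabla u_\infty$ weakly in $L^q$ for all $q$; lower semicontinuity gives $\|\nabla u_\infty\|_{L^\infty(\Omega)}\le C$, so $u_\infty\in W^{1,\infty}_0(\Omega)$. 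Since $\alpha_p^{1/p},\beta_p^{1/p}$ are bounded, after a further subsequence we may also assume $(\alpha_p^{1/p},\beta_p^{1/p})\to(\alpha_\infty,\beta_\infty)$. The boundary condition $u_\infty=0$ on $\partial\Omega$ is inherited from the uniform convergence.

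Next I would pass to the limit in the PDE in the viscosity sense. The function $u_p$ is a viscosity solution of $-\Delta_p u_p = \alpha_p(u_p^+)^{p-1} - \beta_p(u_p^-)^{p-1}$ in $\Omega$. Rewriting the $p$-Laplacian in non-divergence form, one has, wherever $\nabla u_p\ne 0$, that $-\Delta_p u_p = -|\nabla u_p|^{p-2}\bigl(\Delta u_p + (p-2)|\nabla u_p|^{-2}\Delta_\infty u_p\bigr)$. The key step is the local test-function analysis: fix $x_0\in\{u_\infty>0\}\cap\Omega$, let $\phi\in C^2$ touch $u_\infty$ from above at $x_0$ with $u_\infty(x_0)=\phi(x_0)>0$; then for $p$ large $\phi$ (suitably perturbed) is touched from above by $u_p$ at some $x_p\to x_0$, and the viscosity inequality for $u_p$ reads
$$
-|\nabla\phi(x_p)|^{p-4}\Bigl(|\nabla\phi(x_p)|^2\Delta\phi(x_p) + (p-2)\Delta_\infty\phi(x_p)\Bigr) \le \alpha_p\bigl(u_p^+(x_p)\bigr)^{p-1}.
$$
Dividing by $(p-2)|\nabla\phi(x_p)|^{p-4}$ when $\nabla\phi(x_0)\ne0$ and letting $p\to\infty$ forces $-\Delta_\infty\phi(x_0)\le 0$ (the lower-order term vanishes, and the right side, after taking $p$-th roots of the comparison $\bigl(\alpha_p^{1/p}u_p^+(x_p)/|\nabla\phi(x_p)|\bigr)$, contributes only when $|\nabla\phi(x_0)| < \alpha_\infty u_\infty(x_0)$). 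A careful bookkeeping of the competition between the two terms — exactly as in the derivation of \eqref{plap.infty} and \eqref{lam2} in \cite{JLM, Ju-Li-05} — produces the supersolution inequality $\min\{-\Delta_\infty u_\infty, |\nabla u_\infty| - \alpha_\infty u_\infty^+\}\ge 0$ at $x_0$; the symmetric computation with test functions touching from below, together with the simpler fact that $u_p$ is $\infty$-harmonic-in-the-limit where the right-hand side is lower order, yields the reverse inequality. The cases $x_0\in\{u_\infty<0\}$ and $x_0\in\{u_\infty=0\}$ are handled analogously, the latter reducing to pure $\infty$-harmonicity because the right-hand side $\alpha_p(u_p^+)^{p-1}-\beta_p(u_p^-)^{p-1}$ is then negligible compared to $(p-2)$-order terms.

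The main obstacle, as in all such limit-of-$p$-Laplacian results, is the delicate handling of the zero-gradient set of the test function and the precise asymptotics of the right-hand side: one must show that $\alpha_p^{1/p}u_p^+(x_p) \to \alpha_\infty u_\infty(x_0)$ uniformly enough that the term $\bigl(\alpha_p (u_p^+(x_p))^{p-1}\bigr)^{1/(p-2)}$ has the correct limit, and that the non-$\infty$-Laplacian second-order term $|\nabla\phi|^2\Delta\phi$ is genuinely lower order after the division by $p-2$. I would follow the test-function perturbation scheme of \cite{JLM} (replacing $\phi$ by $\phi + \epsilon|x-x_0|^4$ or by powers of $\phi$) to ensure strict touching and nonvanishing gradient at the contact point, and invoke the known convergence machinery from \cite{Ju-Li-05} almost verbatim, the only genuinely new point being that here the right-hand side has two distinct coefficients $\alpha_\infty,\beta_\infty$ acting on the positivity and negativity sets separately — which is precisely what makes the limit system \eqref{ecu.infty} asymmetric.
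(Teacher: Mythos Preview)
Your proposal is correct and follows essentially the same route as the paper: compactness via the energy identity, Morrey's embedding, and Arzel\`a--Ascoli (which the paper isolates as preliminary lemmas in Section~\ref{Prel}), followed by the standard viscosity stability argument with division by $(p-2)|\nabla\phi(x_p)|^{p-4}$, exactly as in \cite{JLM, Ju-Li-05}. One small slip to fix in the write-up: when $\phi$ touches $u_\infty$ from above you are testing the \emph{sub}solution property, so the conclusion at that step is $\min\{\cdots\}\le 0$, not $\ge 0$; the supersolution inequality comes from test functions touching from below --- but since you carry out both directions, the argument is complete.
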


Regarding the limit equation, we define the $\infty-${F}u{\v{c}}{\'{\i}}k spectrum as
$$
  \Sigma_\infty \defeq \Big\{(\alpha,\beta)\in \R^2 \colon \,\, \mbox{there exists a nontrivial viscosity solution } u \mbox{ of }\eqref{ecu.infty} \Big\},
$$
such a $u$ is defined to be an eigenfunction of the pair $(\alpha,\beta)$.
Observe that, by construction,  eigenfunction of \eqref{ecu.infty} belong to $W^{1,\infty}_0(\Omega)$.

When $\Omega$ is the unit interval in $\R$, a full characterization of the limit of  the $p$-{F}u{\v{c}}{\'{\i}}k spectrum is obtained.
\begin{thm} \label{teo.1d}
The limit of the spectrum $\Sigma_p$ as $p\to \infty$ when $\Omega$ is the unit interval $(0,1)\subset \R$ is given by $$ \displaystyle \Sigma_\infty = \bigcup\limits_{k=1}^{\infty} \C_{k,\infty}^\pm, $$ where
\begin{align*}
&\C_{k,\infty}=\left\{ (k(1+s^{-1}),k(1+s)), \, s \in \R^+ \right\} &\quad \text{ if }k \text{ is even}\\
&\C_{k,\infty}^{+}=\left\{ (k-1+s^{-1}(k+1), k+1+s(k-1)), \, s \in \R^+ \right\} &\quad \text{ if }k \text{ is odd}\\
&\C_{k,\infty}^{-}=\left\{ (k+1+ s^{-1}(k-1), k-1+s(k+1)), \, s \in \R^+ \right\} &\quad \text{ if }k \text{ is odd}.
\end{align*}
\end{thm}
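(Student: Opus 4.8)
The plan is to prove the set identity $\Sigma_\infty=\bigcup_{k\ge1}\C_{k,\infty}^\pm$ by establishing the two inclusions separately. The starting point is the classical description of $\Sigma_p$ on $\Omega=(0,1)$ (see \cite{Dra-92}): up to a multiplicative constant, a nontrivial solution $u_p$ of \eqref{ecu} is a finite concatenation of \emph{bumps} — on each maximal subinterval of $\{u_p>0\}$ it is the first Dirichlet eigenfunction of $-\Delta_p$ there with eigenvalue $\alpha_p$, on each maximal subinterval of $\{u_p<0\}$ it is minus such an eigenfunction with eigenvalue $\beta_p$ — glued in a $C^1$ fashion at the interior zeros, so the signs strictly alternate; conversely every admissible configuration of bumps yields a solution and these exhaust $\Sigma_p$. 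Since the first Dirichlet eigenvalue of $-\Delta_p$ on an interval of length $\ell$ is $(p-1)(\pi_p/\ell)^p$ with $\pi_p=\frac{2\pi}{p\sin(\pi/p)}$, a positive bump has length $\ell_p^+=\pi_p(p-1)^{1/p}\alpha_p^{-1/p}$ and a negative one $\ell_p^-=\pi_p(p-1)^{1/p}\beta_p^{-1/p}$, and with $m$ positive and $n$ negative bumps one has $|m-n|\le1$ and $m\ell_p^++n\ell_p^-=1$; for $k=m+n$ this is the equation of $\C_{k,p}$ if $k$ is even and of $\C_{k,p}^\pm$ if $k$ is odd. If $(\alpha_p,\beta_p)\in\Sigma_p$ with $\alpha_p^{1/p},\beta_p^{1/p}$ bounded, then $\ell_p^\pm$ are bounded below by a positive constant, so the bump number $k$ is bounded; passing to a subsequence on which $k$ and the sign pattern are fixed and letting $p\to\infty$ (using $\pi_p\to2$, $(p-1)^{1/p}\to1$) turns $m\ell_p^++n\ell_p^-=1$ into $\frac{2m}{\alpha_\infty}+\frac{2n}{\beta_\infty}=1$, which is exactly the relation defining $\C_{k,\infty}$ (for $k$ even) or $\C_{k,\infty}^\pm$ (for $k$ odd). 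By Theorem~\ref{teo.eq} this already shows that every pair of $\Sigma_\infty$ arising as such a limit lies in $\bigcup_k\C_{k,\infty}^\pm$.

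For $\bigcup_k\C_{k,\infty}^\pm\subseteq\Sigma_\infty$ I would argue by approximation. Fix a point on, say, $\C_{k,\infty}$ with $k$ even; its parametrization forces $\frac{2m}{\alpha_\infty}+\frac{2n}{\beta_\infty}=1$ with $m=n=k/2$, hence $\alpha_\infty,\beta_\infty>k>0$. Put $\alpha_p^{-1/p}=t_p\alpha_\infty^{-1}$, $\beta_p^{-1/p}=t_p\beta_\infty^{-1}$ with $t_p>0$ chosen so that $(\alpha_p,\beta_p)\in\C_{k,p}$; the limit relation forces $t_p=\frac{2}{\pi_p(p-1)^{1/p}}\to1$, so $(\alpha_p,\beta_p)\in\Sigma_p$ and $(\alpha_p^{1/p},\beta_p^{1/p})\to(\alpha_\infty,\beta_\infty)$. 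By Theorem~\ref{teo.eq}, along a subsequence the normalized eigenfunctions converge uniformly to a viscosity solution $u_\infty$ of \eqref{ecu.infty} for the pair $(\alpha_\infty,\beta_\infty)$, and $u_\infty\not\equiv0$ because $\|u_p\|_{L^\infty(0,1)}\ge\|u_p\|_{L^p(0,1)}=1$ passes to the uniform limit; hence $(\alpha_\infty,\beta_\infty)\in\Sigma_\infty$. The odd curves $\C_{k,\infty}^\pm$ are treated identically using $\C_{k,p}^\pm$.

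It remains to prove $\Sigma_\infty\subseteq\bigcup_k\C_{k,\infty}^\pm$, which is the heart of the matter. Let $u_\infty$ be a nontrivial viscosity solution of \eqref{ecu.infty} on $(0,1)$, and use the one-dimensional reductions $\Delta_\infty u=(u')^2u''$ and the fact that, in one variable, an $\infty$-superharmonic function is concave. First, $\{u_\infty=0\}$ has empty interior: a flat piece abutting a region where $u_\infty$ keeps a sign is excluded because testing $-\Delta_\infty u_\infty=0$ from below at the junction with a parabola of positive curvature and small nonzero slope contradicts the supersolution property. So the zero set is a finite set $0=z_0<z_1<\dots<z_k=1$. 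On a nodal interval $(z_{i-1},z_i)$ where $u_\infty>0$, the function is concave and satisfies $|u_\infty'|\ge\alpha_\infty u_\infty$; its maximum must be a genuine corner (an interior critical point with $u_\infty>0$ would violate the supersolution inequality, since there $|u_\infty'|=0<\alpha_\infty u_\infty$), and combining the gradient bound with the sub- and supersolution tests at the corner and at interior points forces $u_\infty$ to be affine on each side of the peak with $|u_\infty'|=\alpha_\infty u_\infty(\mathrm{peak})$ there; therefore $(z_{i-1},z_i)$ is a symmetric tent of base length exactly $2/\alpha_\infty$, and similarly a negative nodal interval is a tent of length $2/\beta_\infty$. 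The equation $-\Delta_\infty u_\infty=0$ at each interior zero $z_i$ then forces the one-sided slopes of the two adjacent tents to coincide, so $u_\infty$ is $C^1$ at $z_i$; in particular two consecutive tents cannot share a sign, so the signs strictly alternate and $|m-n|\le1$, where $m,n$ count positive/negative tents and $k=m+n$. Summing lengths gives $\frac{2m}{\alpha_\infty}+\frac{2n}{\beta_\infty}=1$, i.e.\ $(\alpha_\infty,\beta_\infty)\in\C_{k,\infty}$ when $k$ is even and $(\alpha_\infty,\beta_\infty)\in\C_{k,\infty}^\pm$ (according to the sign of the first tent) when $k$ is odd; conversely each such tent configuration is directly verified to solve \eqref{ecu.infty}. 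The step I expect to be the main obstacle is precisely this one-dimensional viscosity analysis: extracting the \emph{exact} nodal lengths $2/\alpha_\infty$ and $2/\beta_\infty$ (not merely inequalities) from the sub-/supersolution tests at the corners, and the $C^1$-matching across the zeros. Since on $\{u_\infty>0\}$ equation \eqref{ecu.infty} is the $\infty$-eigenvalue equation, this portion could alternatively be imported from the one-dimensional analysis in \cite{JLM,Ju-Li-05}.
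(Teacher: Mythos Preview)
Your argument is more ambitious than the paper's. The paper interprets the theorem purely as ``compute the limit of the explicit curves $\C_{k,p}^\pm$ as $p\to\infty$'': it parametrizes each $\C_{k,p}^\pm$ (using the known one-dimensional formulas and the relation $\lambda_{k,p}=(k\pi_p)^p$), lets $p\to\infty$ using $\pi_p\to 2$, and reads off the stated parametrizations of $\C_{k,\infty}^\pm$. That is essentially your Part~1, and your computation of $\frac{2m}{\alpha_\infty}+\frac{2n}{\beta_\infty}=1$ matches the paper's (the extra $(p-1)^{1/p}$ in your convention for $\lambda_{1,p}$ is harmless since it tends to~$1$). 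So on the part the paper actually proves, you take the same route.

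Your Parts~2 and~3 go well beyond the paper. The paper does \emph{not} prove that the union $\bigcup_k\C_{k,\infty}^\pm$ coincides with the viscosity-defined set $\Sigma_\infty$; despite the notation in the statement, its proof is only the curve-by-curve limit computation, and the symbol $\Sigma_\infty$ is being used loosely there. Your Part~2 (approximation via $\C_{k,p}^\pm$ and Theorem~\ref{teo.eq}) is a clean and correct way to obtain the inclusion $\bigcup_k\C_{k,\infty}^\pm\subseteq\Sigma_\infty$, and is a genuine addition. Your Part~3 (direct viscosity analysis forcing the tent structure and hence the exact nodal lengths $2/\alpha_\infty$, $2/\beta_\infty$) is plausible but, as you yourself flag, not complete: ruling out interior zero intervals, extracting \emph{equalities} for the slopes at the corner rather than inequalities, and the $C^1$-matching across zeros all need careful viscosity arguments (in one dimension the $\infty$-eigenvalue analysis in \cite{JLM,Ju-Li-05} does give the needed ingredients on each nodal interval, but the gluing at the zeros is not addressed there). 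None of this is in the paper, so if you want the full set identity with the viscosity definition of $\Sigma_\infty$, you are proving something strictly stronger than what the paper establishes.
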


In the higher dimensional case the first nontrivial curve in the $\infty$-{F}u{\v{c}}{\'{\i}}k spectrum can be characterized as follows.
\begin{thm} \label{teo.rn}
The trivial lines in the spectrum of \eqref{ecu.infty} are given by
$$
	\C_{1,\infty}^{+}  = \R\times \left\{\frac{1}{\mathfrak{r}}\right\} \quad \text{ and } \quad \C_{1,\infty}^{-}= \left\{\frac{1}{\mathfrak{r}}\right\} \times \R,
$$
where $\mathfrak{r}=\mathfrak{r}(\Omega)$ is the radius of the biggest ball inscribed in $\Omega$. Moreover, the first non-trivial curve in $\Sigma_\infty$ is parametrized as
\begin{equation} \label{curva.2}
	\C_{2,\infty} = \{ (\alpha_\infty(t),\beta_\infty(t)), \,\,\,t\in \R^+\}
\end{equation}
where $\alpha_\infty(t)=t^{-1} c_\infty(t)$ and $\beta_\infty(t)=c_\infty(t)$, and
$$
   c_\infty(t)=\inf_{\PP_2(\Omega)} \max\left\{\frac{t}{\mathfrak{r}(\omega_1)}, \frac{1}{\mathfrak{r}(\omega_2)}\right\}, \,\,\, t\in \R^+.
$$
Here $\PP_2(\Omega)$ denotes the class of all partitions in two disjoint and connected subsets of $\Omega$. Given $(\omega_1, \omega_2)\in \PP_2(\Omega)$, we denote $\mathfrak{r}_i=\mathfrak{r}_i(\omega_i)$ the radius of the biggest ball inscribed in $\omega_i$ ($i=1,2$).

Moreover, the trivial curves $\C_{1,\infty}^{+}$ and $\C_{1,\infty}^{-}$ intersect the second curve $\C_{2,\infty} $
for almost any domain. In fact, the only exception where $\C_{2,\infty} $ is asymptotic to $\C_{1,\infty}^{+}$ and $\C_{1,\infty}^{-}$ is the ball, $\Omega = B_R$.
\end{thm}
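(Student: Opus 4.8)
The plan is to handle the three assertions in turn. \emph{The trivial lines.} If an eigenfunction $u$ of \eqref{ecu.infty} does not change sign, say $u\ge 0$, then $\{u<0\}\cap\Omega=\{u=0\}\cap\Omega=\emptyset$ and \eqref{ecu.infty} reduces to $\min\{-\Delta_{\infty}u,\,|\nabla u|-\alpha u^{+}\}=0$ in $\Omega$, $u=0$ on $\partial\Omega$, which is exactly \eqref{plap.infty} with $\lambda_{1,\infty}(\Omega)$ replaced by $\alpha$. By \cite{JLM} this problem has a positive viscosity solution if and only if $\alpha=1/\mathfrak{r}$, and then any $\infty$-ground state of $\Omega$ serves as $u$ for \emph{every} $\beta$; running the same computation on $-u$ gives $\beta=1/\mathfrak{r}$, $\alpha$ arbitrary. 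Hence the non-sign-changing eigenfunctions of \eqref{ecu.infty} produce exactly $\C_{1,\infty}^{-}=\{1/\mathfrak{r}\}\times\R$ and $\C_{1,\infty}^{+}=\R\times\{1/\mathfrak{r}\}$.

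\emph{The second curve.} I would start from the minimax description of the first nontrivial $p$-Fu\v{c}\'ik curve from \cite{Cu-dF-Go-99}: along the ray $\alpha_p=\tau\beta_p$ the curve $\C_{2,p}$ is $\beta_p=c_p(\tau)$ with, in homogeneous form,
$$c_p(\tau)=\inf_{\gamma}\ \max_{v\in\gamma}\ \frac{\|\nabla v\|_{L^p(\Omega)}^{p}}{\tau\|v^{+}\|_{L^p(\Omega)}^{p}+\|v^{-}\|_{L^p(\Omega)}^{p}},$$
the infimum over continuous paths on the $L^p$-sphere joining $-\varphi_{1,p}$ to $\varphi_{1,p}$. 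Choosing $\tau=\tau_p$ with $\tau_p^{1/p}\to t^{-1}$ and taking $1/p$-powers, the weighted quotient converges to $\|\nabla v\|_{L^\infty}/\max\{t^{-1}\|v^{+}\|_{L^\infty},\|v^{-}\|_{L^\infty}\}$, and a stability argument for the minimax gives $c_p(\tau_p)^{1/p}\to c_\infty(t):=\inf_\gamma\max_v \|\nabla v\|_{L^\infty}/\max\{t^{-1}\|v^{+}\|_{L^\infty},\|v^{-}\|_{L^\infty}\}$; combined with Theorem~\ref{teo.eq} applied along $\C_{2,p}$, this shows $(t^{-1}c_\infty(t),c_\infty(t))\in\Sigma_\infty$. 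To identify $c_\infty(t)$ with the partition quantity I would prove two inequalities. For the lower bound, by the intermediate value theorem applied to $v\mapsto t^{-1}\|v^{+}\|_{L^\infty}-\|v^{-}\|_{L^\infty}$ every admissible path contains a balanced function $v^{*}$ with $t^{-1}\|v^{*+}\|_{L^\infty}=\|v^{*-}\|_{L^\infty}$, and the elementary bound $\|\nabla w\|_{L^\infty(\omega)}\ge\|w\|_{L^\infty(\omega)}/\mathfrak{r}(\omega)$ on its nodal domains $\omega_1=\{v^{*}>0\}$, $\omega_2=\{v^{*}<0\}$ yields $\|\nabla v^{*}\|_{L^\infty}/\|v^{*-}\|_{L^\infty}\ge\max\{t/\mathfrak{r}(\omega_1),1/\mathfrak{r}(\omega_2)\}\ge\inf_{\PP_2(\Omega)}\max\{\cdot\}$. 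For the upper bound, given $(\omega_1,\omega_2)\in\PP_2(\Omega)$, the function $v_0=t\,w_1-w_2$ with $w_i=\dist(\cdot,\partial\omega_i)/\mathfrak{r}(\omega_i)$ on $\omega_i$ satisfies $\max\{t^{-1}\|v_0^{+}\|_{L^\infty},\|v_0^{-}\|_{L^\infty}\}=1$ and $\|\nabla v_0\|_{L^\infty}=\max\{t/\mathfrak{r}(\omega_1),1/\mathfrak{r}(\omega_2)\}$, and one joins $v_0$ to $\pm\varphi_{1,\infty}$ through one-signed functions keeping the quotient below this value, using $\mathfrak{r}(\omega_i)\le\mathfrak{r}(\Omega)$. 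Finally, that $\C_{2,\infty}$ is the \emph{first} nontrivial curve follows because a sign-changing solution $u$ of \eqref{ecu.infty} for $(\alpha_\infty,\beta_\infty)$ is, on $\{u>0\}$ and on $\{u<0\}$, a positive solution of the $\infty$-eigenvalue equation, so $\alpha_\infty\ge 1/\mathfrak{r}(\{u>0\})$ and $\beta_\infty\ge 1/\mathfrak{r}(\{u<0\})$ by \cite{JLM}; evaluating $c_\infty$ at $t=\beta_\infty/\alpha_\infty$ on the partition $(\{u>0\},\{u<0\})$ gives $c_\infty(t)\le\beta_\infty$, so $(\alpha_\infty,\beta_\infty)$ lies on or beyond $\C_{2,\infty}$ along its ray, while one-signed solutions only fill the trivial lines.

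\emph{Intersection and asymptotics.} Since $\mathfrak{r}(\omega_2)\le\mathfrak{r}(\Omega)$ for every partition, $c_\infty(t)\ge 1/\mathfrak{r}$, so $\C_{2,\infty}$ meets $\C_{1,\infty}^{+}=\R\times\{1/\mathfrak{r}\}$ iff $c_\infty(t)=1/\mathfrak{r}$ for some $t$, i.e. iff there is a partition with $\mathfrak{r}(\omega_2)=\mathfrak{r}(\Omega)$ and $\mathfrak{r}(\omega_1)\ge t\,\mathfrak{r}(\Omega)$. If $\Omega$ is not a ball, then $\Omega\setminus\overline{B_{\mathfrak{r}}}$ (with $B_{\mathfrak{r}}$ a largest inscribed ball) is a nonempty open set and contains a ball $B_\rho$; taking $\omega_1=B_\rho$, $\omega_2=\Omega\setminus\overline{B_\rho}$ (connected for $n\ge2$) gives $\mathfrak{r}(\omega_2)=\mathfrak{r}(\Omega)$, hence $c_\infty(t)=1/\mathfrak{r}$ for all $t\le\rho/\mathfrak{r}(\Omega)$ and $\C_{2,\infty}$ meets---indeed merges with---$\C_{1,\infty}^{+}$, and symmetrically $\C_{1,\infty}^{-}$. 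If $\Omega=B_R$, no partition into two proper connected subsets has a piece containing the whole inscribed ball $B_R$, so $\mathfrak{r}(\omega_i)<R$ for both and $c_\infty(t)>1/R$ for all $t$; on the other hand, choosing the balanced partitions $\mathfrak{r}(\omega_1)=t\,\mathfrak{r}(\omega_2)$ with $\omega_1$ a small ball forces $c_\infty(t)\to1/R$ as $t\to0^{+}$, and symmetrically as $t\to+\infty$, so $\C_{2,\infty}$ is asymptotic to but disjoint from the two trivial lines---and the ball is the only domain with this behaviour.

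\emph{Main obstacle.} The crux is the stability of the minimax as $p\to\infty$, i.e. the identity $\lim_p c_p(\tau_p)^{1/p}=c_\infty(t)$: the $\limsup$ needs competitor paths built from the partition-plus-cone functions with weighted $L^p$-quotients controlled uniformly in $p$, while the $\liminf$ requires extracting from nearly optimal paths a balanced function, passing to a uniform/weak-$*$ limit whose positivity set may drift with $p$, and recognizing its quotient as arising from a genuine connected partition of $\Omega$. Keeping the nodal sets connected throughout, and the precise use in the first-curve step of the $\infty$-eigenvalue characterization of \cite{JLM} for the (possibly irregular) nodal domains $\{u>0\}$, $\{u<0\}$, are the points demanding the most care.
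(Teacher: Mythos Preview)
Your outline is correct but takes a genuinely different route from the paper on the key step. For the characterization of $\C_{2,\infty}$ the paper does \emph{not} start from the path-minimax description of $\C_{2,p}$ in \cite{Cu-dF-Go-99}; it quotes instead the optimal-partition formula of Conti--Terracini--Verzini \cite{terra},
\[
c_p(t)=\inf_{(\omega_1,\omega_2)\in\PP_2(\Omega)}\max\bigl\{t\,\lambda_{1,p}(\omega_1),\ \lambda_{1,p}(\omega_2)\bigr\},
\]
so the limit amounts to replacing each $\lambda_{1,p}(\omega_i)^{1/p}$ by $1/\mathfrak{r}(\omega_i)$ via \eqref{lam1}. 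This completely bypasses what you correctly flag as the main obstacle---stability of the path minimax as $p\to\infty$ and the identification of the limiting quotient with the partition quantity---at the cost of invoking a less elementary external result. Your route is more self-contained and explains \emph{why} the partition formula appears (through the balanced function and the cone competitors), but the $\liminf$ half of your stability argument, where one must extract from nearly optimal paths a limit with controllable, connected nodal domains, is genuinely delicate and would need much more detail. For the trivial lines and the ``first nontrivial curve'' assertion the paper again passes to the limit from the $p$-picture (limits of $\{\lambda_{1,p}\}\times\R$, plus a short contradiction argument using the partition formula), whereas you argue directly from the limit equation \eqref{ecu.infty}; your treatment is more intrinsic to $\Sigma_\infty$, though the ``only if'' direction for positive $\infty$-eigenfunctions that you attribute to \cite{JLM} should be pinned to a precise statement. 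The intersection/asymptotics part is essentially the same in both proofs; your partition (a small ball inside $\Omega\setminus\overline{B_{\mathfrak r}}$ as $\omega_1$) is slightly more careful about connectedness than the paper's choice $\omega_1=B_{\mathfrak r}$.
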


It is remarkable that, contrary to expectations, Theorem \ref{teo.rn} shows that  $\C_{2,\infty}$ does not behaves as a hyperbolic curve asymptotic to the trivial lines, in fact, in most cases (the only exception is the ball) it intersect them (compare with \cite{Cu-dF-Go-99, Cu-dF-Go-98, Dra-92, Pe-04} for the $p$-Laplacian counterpart). In Section \ref{sec4}, we will present a complete description about the behaviour of $\C_{2,\infty}$ (see in particular Corollary \ref{cor4.1}). Finally, in Section \ref{ClasFucikSpect}, we will present some interesting examples in order to illustrate such an unusual phenomena for $\C_{2,\infty}$.

In conclusion, we would   highlight that our approach is flexible enough in order to be applied for other classes of degenerate operators with $p$-Laplacian structure. Some enlightening examples are

 \begin{itemize}

   \item[\checkmark] Anisotropic operators like the \textit{pseudo $p$-Laplacian}
$$
   \displaystyle \tilde{\Delta}_p u \defeq \sum_{i=1}^{N} \frac{\partial }{\partial x_i}\left(\left|\frac{\partial u}{\partial x_i}\right|^{p-2}\frac{\partial u}{\partial x_i}\right).
$$
The eigenvalue problem and its corresponding limit as $p \to \infty$ for such a class of operators were studied in \cite{BK}.

   \item[\checkmark] Nonlocal operators like the \textit{fractional $p$-Laplacian}
$$
    \left(-\Delta\right)_p^s u(x) \defeq \displaystyle C_{N, s, p}.\text{P.V.}\int_{\R^N} \frac{|u(y)-u(x)|^{p-2}(u(y)-u(x))}{|x-y|^{n+sp}}dy.
$$
where $p>1$, $s \in (0, 1)$, P.V. stands for the Cauchy principal value and $C_{N, s, p}$ is a normalizing constant. The mathematical tools in order to study the $\infty$-Fu{\v{c}}{\'{\i}}k spectrum for this class of operators can be found in the following articles,  \cite{FP}, \cite{BKGS}, \cite{LL} and \cite{PSY}.
\end{itemize}

The manuscript is organized as follows: In Section \ref{Prel} we introduce the mathematical machinery (notation and definitions) and several auxiliary results which play an important role in order to prove
Theorem \ref{teo.eq}. In Section \ref{LimProb} we prove Theorem \ref{teo.eq}. In Subsection \ref{sect-1-d} we study in detail the one-dimensional case. The general case is analyzed in Subsection \ref{GenCase}. Finally, Section \ref{ClasFucikSpect}
 is devoted to present several examples where explicit computation of the spectrum are made, as well as the profile of such solutions.

\section{Preliminary results}\label{Prel}

In this section we introduce some definitions and auxiliary lemmas we will use throughout this article.

Let us start by defining the notion of weak solution to
\begin{equation} \label{plap.g}
 - \Delta_p u = g(u) \quad \text{in} \quad \Omega,
\end{equation}
 where $g: \R \to \R$ is a continuous function. Since we will study the asymptotic behavior as $p \to \infty $, without loss of generality we can assume that $p>\max\{2,n\}$.

\begin{definition}\label{DefWS}
  A function $u \in W^{1,p}(\Omega) \cap C(\Omega)$ is said to be a weak solution to \eqref{plap.g} if it fulfills
  $$
  \displaystyle \int_{\Omega} |\nabla u|^{p-2}\nabla u \cdot \nabla \phi\, dx = \int_{\Omega} g(u)\phi \,dx, \qquad \forall \phi \in C^{\infty}_0(\Omega).
  $$
\end{definition}
Since $p > 2$, \eqref{plap.g} is not singular at points where the gradient vanishes, and  consequently, the mapping
$$
   x \mapsto \Delta_p \phi(x) = |\nabla \phi(x)|^{p-2}\Delta \phi(x) + (p-2)|\nabla \phi(x)|^{p-4}\Delta_{\infty} \phi(x)
$$
is well-defined and continuous for all $\phi \in C^2(\Omega)$.

Next, we give the definition of viscosity solutions to \eqref{plap.g}. For the reader's convenience we recommend the survey \cite{CIL} on theory of viscosity solutions.

\begin{definition}
  An upper (resp. lower) semi-continuous function $u: \Omega \to \R$ is said to be a viscosity sub-solution (resp. super-solution) to \eqref{plap.g} if, whenever $x_0 \in \Omega$ and $\phi \in C^2(\Omega)$ are such that $u-\phi$ has a strict local maximum (resp. minimum) at $x_0$, then
$$
    -\Delta_p \phi(x_0) \geq g(\phi(x_0)) \quad (\text{resp.} \,\,\,\leq g(\phi(x_0))).
$$
Finally, a function $u \in C(\Omega)$ is said to be a viscosity solution to \eqref{plap.g} if it is
simultaneously a viscosity sub-solution and a viscosity super-solution.
\end{definition}

Throughout this article, we will consider $g$ defined as
$$
   g(u(x)) = \alpha_p(u^{+})^{p-1}(x)- \beta_p(u^{-})^{p-1}(x).
$$

The following lemmas will be useful for our arguments.

\begin{lemma}\label{Lemma2.4} Assume $n<p < \infty$ and let $u \in W^{1, p}_0(\Omega)$ be a weak solution to \eqref{ecu} normalized by $\|u\|_{L^p(\Omega)} = 1$. Then, $u \in C^{0, \gamma}(\Omega)$, where $\gamma = 1- \frac{n}{p}$. Moreover, the following holds
\begin{itemize}
  \item[\checkmark] $L^{\infty}$-bounds
  $$
  \|u\|_{L^{\infty}(\Omega)} \leq \mathfrak{C}_1,
  $$
  \item[\checkmark] H\"{o}lder estimate
  $$
  \frac{|u(x)-u(y)|}{|x-y|^{\gamma}} \leq \mathfrak{C}_2,
  $$
 \end{itemize}
  where $\mathfrak{C}_1$ and $\mathfrak{C}_2$ are constants depending on $n$, $\alpha_p$ and $\beta_p$.
\end{lemma}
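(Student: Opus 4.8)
The plan is to derive the two estimates from standard regularity theory for the $p$-Laplacian, using the normalization $\|u\|_{L^p(\Omega)}=1$ together with the assumption $p>\max\{2,n\}$ which puts us in the supercritical Sobolev regime where bounded weak solutions are automatically H\"older continuous. First I would test the weak formulation with $\phi = u$ itself (strictly speaking with an approximation $\phi_\varepsilon\in C_0^\infty(\Omega)$, $\phi_\varepsilon\to u$ in $W^{1,p}_0$, then pass to the limit) to obtain the energy identity
$$
\int_\Omega |\nabla u|^p\,dx = \alpha_p\int_\Omega (u^+)^p\,dx + \beta_p\int_\Omega (u^-)^p\,dx \le (\alpha_p+\beta_p)\,\|u\|_{L^p(\Omega)}^p = \alpha_p+\beta_p,
$$
so that $\|\nabla u\|_{L^p(\Omega)}^p \le \alpha_p+\beta_p$ depends only on $\alpha_p,\beta_p$ (and not on $n$). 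Since $p>n$, the Morrey embedding $W^{1,p}_0(\Omega)\hookrightarrow C^{0,\gamma}(\overline\Omega)$ with $\gamma=1-\tfrac{n}{p}$ gives both $u\in C^{0,\gamma}(\Omega)$ and the two quantitative bounds at once: there is a dimensional Morrey constant $C_M=C_M(n,p,\Omega)$ with
$$
\|u\|_{L^\infty(\Omega)} \le C_M\,\|u\|_{W^{1,p}(\Omega)} \qquad\text{and}\qquad \frac{|u(x)-u(y)|}{|x-y|^\gamma} \le C_M\,\|\nabla u\|_{L^p(\Omega)}.
$$
Combining with the energy bound and $\|u\|_{L^p}=1$ yields $\mathfrak C_1 = C_M\big(1+(\alpha_p+\beta_p)^{1/p}\big)$ and $\mathfrak C_2 = C_M(\alpha_p+\beta_p)^{1/p}$, both depending only on $n$, $\alpha_p$, $\beta_p$ (the domain-dependence of $C_M$ being absorbed, as $\Omega$ is fixed).

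Alternatively — and this is the route I would actually write out if one wants the $C^{0,\gamma}$ conclusion \emph{stated with exponent exactly $1-n/p$} independently of any a priori $L^\infty$ bound — one invokes the interior and boundary regularity theory of DiBenedetto and Tolksdorf for quasilinear equations with bounded right-hand side: once $u\in L^\infty$, the function $g(u)=\alpha_p(u^+)^{p-1}-\beta_p(u^-)^{p-1}$ lies in $L^\infty(\Omega)$ with $\|g(u)\|_{L^\infty}\le (\alpha_p+\beta_p)\mathfrak C_1^{p-1}$, and the De Giorgi–Nash–Moser-type estimates for $-\Delta_p u = f\in L^\infty$ give a $C^{1,\alpha}_{\mathrm{loc}}$ interior bound (hence in particular the H\"older estimate) plus, using the smoothness of $\partial\Omega$ and the zero boundary datum, a global $C^{0,\gamma}(\overline\Omega)$ bound. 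I would present the Morrey argument as the clean self-contained one and merely cite the DiBenedetto/Tolksdorf results for the sharp statement.

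The only genuinely delicate point is the justification that $u$ itself is an admissible test function: Definition \ref{DefWS} allows only $\phi\in C_0^\infty(\Omega)$, so one must approximate $u\in W^{1,p}_0(\Omega)$ by $C_0^\infty$ functions in the $W^{1,p}$-norm and pass to the limit in both sides of the identity — on the left using that $|\nabla u|^{p-2}\nabla u\in L^{p'}(\Omega)$, on the right using that $g(u)\in L^{p'}(\Omega)$ (which follows from $u\in L^p$ and the explicit power form of $g$ once one knows $u\in L^\infty$, or more directly since $(u^\pm)^{p-1}\in L^{p'}$ because $u\in L^p$). This is routine but should be spelled out, since everything downstream rests on the resulting energy bound. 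Everything else — Morrey's inequality, the elementary bound $\int(u^+)^p+\int(u^-)^p=\|u\|_{L^p}^p=1$ — is standard.
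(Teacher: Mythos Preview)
Your proposal is correct and follows essentially the same route as the paper: test the equation with $u$ to bound $\|\nabla u\|_{L^p}$, then apply Morrey's embedding. The only cosmetic differences are that the paper bounds the energy by $\max\{\alpha_p,\beta_p\}$ rather than $\alpha_p+\beta_p$ (using $\int (u^+)^p+\int (u^-)^p=1$), and obtains the H\"older seminorm via the intermediate step $\|\nabla u\|_{L^n}\le |\Omega|^{\frac{p-n}{pn}}\|\nabla u\|_{L^p}$; your density justification for testing with $u$ and the DiBenedetto/Tolksdorf alternative are extra but not needed.
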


\begin{proof}
Multiplying \eqref{ecu} by $u$ and integrating by parts we obtain
$$
  \int_{\Omega} |\nabla u|^p \, dx  =   \alpha_p\int_{\Omega} u_{+}^{p} \, dx + \beta_p\int_{\Omega} u_{-}^{p}\, dx  \leq   \max\{\alpha_p, \beta_p\}.
$$
Now, by Morrey's estimates and the previous inequality, there is a  positive constant $\mathfrak{C}=\mathfrak{C}(n,\Omega)$  independent on $p$ such that
$$
   \|u\|_{L^{\infty}(\Omega)} \leq \mathfrak{C} \|\nabla u\|_{L^p(\Omega)} \leq \mathfrak{C}\max\left\{\alpha_p^{1/p}, \beta_p^{1/p} \right\},
$$
which proves the first statement.

For the second part, since $p > n$, combining the H\"{o}lder's inequality and Morrey's estimates we have
$$
   \frac{|u(x)-u(y)|}{|x-y|^{\gamma}} \leq \mathfrak{C} \|\nabla u\|_{L^n(\Omega)} \leq \mathfrak{C}|\Omega|^{\frac{p-n}{pn}}\|\nabla u\|_{L^p(\Omega)}\leq \hat{\mathfrak{C}}|\Omega|^{\frac{p-n}{pn}},
$$
where $\hat{\mathfrak{C}}$ depends only on $n$ and $\Omega$.
\end{proof}

The last result implies that any family of weak solutions to \eqref{ecu} with $\alpha_p^{1/p}$, $\beta_p^{1/p}$ bounded is pre-compact in the uniform topology. Therefore, the existence of a uniform limit is established in Theorem \ref{teo.eq}.

\begin{lemma} Let $\{u_p\}_{p>1}$ be a sequence of weak solutions to \eqref{ecu}. Suppose that $\left(\alpha_p^{1/p}, \beta_p^{1/p}\right) \to (\alpha_{\infty}, \beta_{\infty})$ as $p \to \infty$. Then, there exists a subsequence $p_i \to \infty$ and a limit function $u_{\infty}$ such that
$$
   \displaystyle \lim_{p_i \to \infty} u_{p_i}(x) = u_{\infty}(x)
$$
uniformly in $\Omega$. Moreover, $u_{\infty}$ is Lipschitz continuous with
$$
\frac{|u_{\infty}(x)-u_{\infty}(y)|}{|x-y|} \leq \mathfrak{C}\max\left\{\alpha_{\infty}, \beta_{\infty}\right\}.
$$
\end{lemma}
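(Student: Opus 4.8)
The plan is to combine the uniform Hölder estimates from Lemma \ref{Lemma2.4} with a standard diagonal (Arzelà–Ascoli) argument to extract a uniformly convergent subsequence, and then upgrade the regularity of the limit from uniform Hölder bounds with exponent $\gamma_p = 1 - n/p \to 1$ to Lipschitz continuity.

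\medskip

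\noindent\textbf{Step 1: Equicontinuity and extraction.}
First I would fix any exponent $q$ with $n < q < \infty$. For all $p \geq q$, Hölder's inequality gives $\|\nabla u_p\|_{L^q(\Omega)} \leq |\Omega|^{\frac{1}{q}-\frac{1}{p}}\|\nabla u_p\|_{L^p(\Omega)} \leq |\Omega|^{\frac{1}{q}-\frac{1}{p}}\max\{\alpha_p^{1/p},\beta_p^{1/p}\}$, which is bounded uniformly in $p$ because $\alpha_p^{1/p},\beta_p^{1/p}$ converge (hence are bounded). Morrey's embedding then yields, for each fixed $q$, a uniform bound
$$
\frac{|u_p(x)-u_p(y)|}{|x-y|^{1-n/q}} \leq \mathfrak{C}(n,q,\Omega)\, \sup_p \max\{\alpha_p^{1/p},\beta_p^{1/p}\} =: \mathfrak{M}_q,
$$
valid for all $p \geq q$. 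Together with the $L^\infty$-bound $\|u_p\|_{L^\infty(\Omega)} \leq \mathfrak{C}_1$, the family $\{u_p\}_{p \geq q}$ is uniformly bounded and equicontinuous on $\overline\Omega$, so by Arzelà–Ascoli it is precompact in $C(\overline\Omega)$. A diagonal argument over a sequence $q_j \to \infty$ produces a single subsequence $p_i \to \infty$ and a function $u_\infty \in C(\overline\Omega)$ with $u_{p_i} \to u_\infty$ uniformly in $\Omega$; since each $u_{p_i}$ vanishes on $\partial\Omega$, so does $u_\infty$.

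\medskip

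\noindent\textbf{Step 2: Lipschitz regularity of the limit.}
Fix $x, y \in \Omega$ and $q > n$. Passing to the limit $p_i \to \infty$ in the inequality $|u_{p_i}(x)-u_{p_i}(y)| \leq \mathfrak{C}(n,q,\Omega)\,\max\{\alpha_{p_i}^{1/p_i},\beta_{p_i}^{1/p_i}\}\, |x-y|^{1-n/q}$ gives
$$
|u_\infty(x)-u_\infty(y)| \leq \mathfrak{C}(n,q,\Omega)\,\max\{\alpha_\infty,\beta_\infty\}\,|x-y|^{1-n/q}
$$
for every $q > n$, where one must check that the constant $\mathfrak{C}(n,q,\Omega)$ stays under control as $q \to \infty$ — this is where the one subtle point lies, since Morrey's constant could in principle blow up. The remedy is to insert an intermediate exponent: use $\|\nabla u_{p_i}\|_{L^q(\Omega)} \leq |\Omega|^{\frac{1}{q}-\frac{1}{p_i}}\|\nabla u_{p_i}\|_{L^{p_i}(\Omega)}$ and then send $p_i \to \infty$ first, obtaining for the limit a uniform $W^{1,q}$-bound $\|\nabla u_\infty\|_{L^q(\Omega)} \leq |\Omega|^{1/q}\max\{\alpha_\infty,\beta_\infty\}$ for all finite $q$ (the limit function inherits a weak gradient as the weak-$L^q$ limit of $\nabla u_{p_i}$). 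Letting $q \to \infty$ and using that $\|f\|_{L^q(\Omega)} \to \|f\|_{L^\infty(\Omega)}$ for bounded domains, we get $u_\infty \in W^{1,\infty}(\Omega)$ with $\|\nabla u_\infty\|_{L^\infty(\Omega)} \leq \mathfrak{C}\max\{\alpha_\infty,\beta_\infty\}$, which by the mean-value inequality on the (bounded, hence quasiconvex after the standard argument) domain $\Omega$ is equivalent to the claimed Lipschitz estimate $\frac{|u_\infty(x)-u_\infty(y)|}{|x-y|} \leq \mathfrak{C}\max\{\alpha_\infty,\beta_\infty\}$.

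\medskip

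\noindent\textbf{Main obstacle.}
The only genuinely delicate point is controlling the dependence of the Morrey/embedding constant as the integrability exponent tends to infinity; taking the limit in $p_i$ \emph{before} sending $q \to \infty$ (i.e., working with the fixed limit function $u_\infty$ and its uniform $W^{1,q}$-bounds) circumvents this cleanly, at the cost of invoking that $\nabla u_{p_i}$ converges weakly in each $L^q$ — which follows from the uniform $L^q$-bounds and the uniform convergence $u_{p_i}\to u_\infty$ identifying the weak limit as $\nabla u_\infty$. Everything else is a routine application of Lemma \ref{Lemma2.4} and Arzelà–Ascoli.
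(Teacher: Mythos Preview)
Your proof is correct and follows essentially the same route as the paper: invoke the uniform H\"older estimates of Lemma~\ref{Lemma2.4} together with Arzel\`a--Ascoli to extract a uniformly convergent subsequence, then let the H\"older exponent $1-n/p$ tend to $1$ to upgrade the limit to Lipschitz. The paper's own proof is only a two-sentence sketch, so your more careful treatment of the embedding constant---passing first to uniform $W^{1,q}$-bounds on $u_\infty$ and then sending $q\to\infty$---is a legitimate elaboration of the same idea rather than a different argument.
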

\begin{proof} Existence of $u_{\infty}$ as a uniform limit is a direct consequence of the Lemma \ref{Lemma2.4} combined with an Arzel\`{a}-Ascoli compactness criteria. Finally, the last statement holds by passing to the limit in the H\"{o}lder estimates from Lemma \ref{Lemma2.4}.
\end{proof}

The following lemma gives a relation between weak and viscosity sub and super-solution to \eqref{plap.g}.

\begin{lemma}\label{EquivSols} A continuous weak sub-solution (resp. super-solution) $u \in W_{\text{loc}}^{1,p}(\Omega)$ to \eqref{plap.g} is a viscosity sub-solution (resp. super-solution) to
$$
   -\left[|\nabla u|^{p-2} \Delta u + (p-2)|\nabla u(x)|^{p-4}\Delta_{\infty} u\right] = g(u(x)) \quad \text{in} \quad \Omega.
$$
\end{lemma}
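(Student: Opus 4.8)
The plan is to argue by contradiction in the standard way one converts a weak formulation into a pointwise (viscosity) inequality. I will treat the sub-solution case; the super-solution case is entirely symmetric (replace $\max$ by $\min$, reverse inequalities, and use $-u$). So suppose $u \in W^{1,p}_{\loc}(\Omega) \cap C(\Omega)$ is a weak sub-solution of \eqref{plap.g}, i.e. $\int_\Omega |\nabla u|^{p-2}\nabla u\cdot\nabla\phi\,dx \le \int_\Omega g(u)\phi\,dx$ for every nonnegative $\phi \in C_0^\infty(\Omega)$, but fails to be a viscosity sub-solution of the operator in divergence-expanded form. Then there exist $x_0 \in \Omega$ and $\psi \in C^2(\Omega)$ such that $u - \psi$ has a strict local maximum at $x_0$, yet
$$
-\left[|\nabla\psi(x_0)|^{p-2}\Delta\psi(x_0) + (p-2)|\nabla\psi(x_0)|^{p-4}\Delta_\infty\psi(x_0)\right] > g(\psi(x_0)) = g(u(x_0)).
$$
Here the quantity on the left is exactly $-\Delta_p\psi(x_0)$, which, as noted in the remark following Definition \ref{DefWS}, is well-defined and continuous near $x_0$ because $p > 2$ (so the operator is non-degenerate as a second-order expression in $C^2$ functions, even where $\nabla\psi$ vanishes — when $\nabla\psi(x_0)=0$ both sides of the test inequality read $-\Delta_p\psi(x_0)=0$ and one interprets the $C^2$ computation accordingly; I will handle this degenerate case separately).

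First I would use continuity of $x \mapsto \Delta_p\psi(x)$ and of $g\circ u$ to find a small ball $B_r(x_0) \Subset \Omega$ on which $-\Delta_p\psi(x) > g(u(x))$ still holds, so that $\psi$ is a \emph{classical} strict super-solution of \eqref{plap.g} on $B_r(x_0)$. Since the local maximum of $u-\psi$ at $x_0$ is strict, after subtracting the constant $\psi(x_0)-u(x_0)$ from $\psi$ (which does not change $\Delta_p\psi$) I may assume $\psi(x_0) = u(x_0)$ and then choose $0 < \delta < \min_{\partial B_r(x_0)}(\psi - u)$. Set $\psi_\delta \defeq \psi - \delta$. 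Then $\psi_\delta < u$ on $\partial B_r(x_0)$ while $\psi_\delta(x_0) = u(x_0) - \delta < u(x_0)$, so the open set $U \defeq \{x \in B_r(x_0) : u(x) > \psi_\delta(x)\}$ is a nonempty subset compactly contained in $B_r(x_0)$ with $u = \psi_\delta$ on $\partial U$. The key step is now the comparison: test the weak inequality for $u$ and the classical (hence weak) super-solution inequality for $\psi_\delta$ against the admissible nonnegative test function $\phi \defeq (u - \psi_\delta)^+$, which lies in $W^{1,p}_0(U)$ and extends by zero to $C_0$-type competitor. Subtracting the two weak inequalities gives
$$
\int_U \left(|\nabla u|^{p-2}\nabla u - |\nabla\psi_\delta|^{p-2}\nabla\psi_\delta\right)\cdot\nabla(u-\psi_\delta)\,dx \le \int_U \left(g(u) - g(\psi_\delta)\right)(u-\psi_\delta)\,dx.
$$
Wait — $g$ need not be monotone, so the right-hand side is not obviously non-positive; I would fix this by first shrinking $r$ further so that on $B_r(x_0)$ we have the strict inequality $-\Delta_p\psi > g(u)$ with a uniform gap, and then choosing $\delta$ small enough that $|g(\psi_\delta) - g(u)|$ is dominated by that gap on $U$ (using uniform continuity of $g$ on the compact range of $u$ over $\overline{B_r(x_0)}$, together with $0 \le u - \psi_\delta \le \delta$ on $U$); this keeps $\psi_\delta$ a classical strict super-solution on $U$ and makes the corrected right-hand side still control the left. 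By the strict monotonicity of the vector field $\xi \mapsto |\xi|^{p-2}\xi$ (i.e. $(|\xi|^{p-2}\xi - |\eta|^{p-2}\eta)\cdot(\xi - \eta) \ge c_p|\xi-\eta|^p$ for $p \ge 2$), the left-hand side is $\ge c_p \int_U |\nabla(u - \psi_\delta)|^p\,dx \ge 0$, with equality only if $\nabla u = \nabla\psi_\delta$ a.e. on $U$; combined with $u = \psi_\delta$ on $\partial U$ this forces $u \equiv \psi_\delta$ on $U$, contradicting that $U$ is nonempty and open (on which $u > \psi_\delta$). This contradiction establishes the viscosity sub-solution property.

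The main obstacle I anticipate is precisely the non-monotonicity of $g$ in the comparison step together with the degenerate-gradient case $\nabla\psi(x_0) = 0$. For the former, the remedy above — localize so strongly that the $C^2$ strict inequality survives with a quantitative gap, then pick $\delta$ small relative to the gap — is what makes the argument go through, and it is worth writing carefully. For the latter, when $\nabla\psi(x_0)=0$ the test inequality $-\Delta_p\psi(x_0) > g(u(x_0))$ reads $0 > g(u(x_0))$; one still has continuity of $x \mapsto \Delta_p\psi(x)$ (this is the point of the $p > 2$ hypothesis and the displayed formula in the remark), so the same localization argument applies verbatim with $\psi_\delta$ a classical strict super-solution near $x_0$, and no separate treatment is actually needed beyond observing that the expanded expression is genuinely continuous there. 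Everything else — admissibility of $\phi = (u-\psi_\delta)^+ \in W^{1,p}_0(U)$, the Sobolev/monotonicity inequality, the boundary matching — is routine, so I would present those briefly and spend the bulk of the write-up on the localization-plus-$\delta$ bookkeeping that tames $g$.
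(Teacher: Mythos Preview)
Your strategy is the paper's: assume the viscosity inequality fails at $x_0$, propagate the strict inequality to a ball by continuity, shift the test function by a small constant so that it crosses $u$ inside the ball but not on its boundary, test with the positive part of the difference, and invoke the strict monotonicity of $\xi\mapsto|\xi|^{p-2}\xi$. Two slips need fixing.

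First, the boundary comparison is reversed. After normalizing $\psi(x_0)=u(x_0)$ you have $u<\psi$ on $\partial B_r(x_0)$ (strict local maximum of $u-\psi$), so for $0<\delta<\min_{\partial B_r(x_0)}(\psi-u)$ one gets $\psi_\delta=\psi-\delta>u$ on $\partial B_r(x_0)$, not $\psi_\delta<u$. With the correct inequality the set $U=\{u>\psi_\delta\}\cap B_r(x_0)$ contains $x_0$ and is compactly contained in $B_r(x_0)$; with your stated inequality it would not be.

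Second, the detour through $g(\psi_\delta)$ and the ``gap'' argument is unnecessary and, as written, does not close: bounding $|g(u)-g(\psi_\delta)|$ by the gap makes the right-hand side $\int_U\big(g(u)-g(\psi_\delta)\big)(u-\psi_\delta)\,dx$ small but not nonpositive, so you still cannot conclude $\int_U|\nabla(u-\psi_\delta)|^p\,dx\le 0$. The clean fix is already in your hands: you established $-\Delta_p\psi>g(u)$ on $B_r(x_0)$ by continuity of $\Delta_p\psi$ and of $g\circ u$, hence $-\Delta_p\psi_\delta>g(u)$ pointwise, and multiplying by $(u-\psi_\delta)^+$ and integrating by parts gives
\[
\int_U|\nabla\psi_\delta|^{p-2}\nabla\psi_\delta\cdot\nabla(u-\psi_\delta)\,dx\ \ge\ \int_U g(u)\,(u-\psi_\delta)\,dx.
\]
Subtracting this from the weak sub-solution inequality for $u$ (same test function, same source $g(u)$) yields directly
\[
\int_U\big(|\nabla u|^{p-2}\nabla u-|\nabla\psi_\delta|^{p-2}\nabla\psi_\delta\big)\cdot\nabla(u-\psi_\delta)\,dx\ \le\ 0,
\]
and monotonicity of $\xi\mapsto|\xi|^{p-2}\xi$ forces $u\equiv\psi_\delta$ on $U$, contradicting $u(x_0)>\psi_\delta(x_0)$. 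No hypothesis on $g$ beyond continuity is needed. For comparison, the paper localizes with $g(\phi)$ rather than $g(u)$ and then needs $\int\big(g(\phi)-g(u)\big)(\Psi-u)^+\,dx\le 0$; this holds there because the specific $g(t)=\alpha_p(t^+)^{p-1}-\beta_p(t^-)^{p-1}$ is nondecreasing, whereas your corrected route works for arbitrary continuous $g$.
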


\begin{proof} Let us proceed for the case of super-solutions. Fix $x_0 \in \Omega$ and $\phi \in C^2(\Omega)$ such that $\phi$ touches $u$ by bellow at $x=x_0$, i.e., $u(x_0) = \phi(x_0)$ and $u(x)> \phi(x)$ for $x \neq x_0$. Our goal is to establish that
$$
  -\left[|\nabla \phi(x_0)|^{p-2}\Delta \phi(x_0) + (p-2)|\nabla \phi(x_0)|^{p-4}\Delta_{\infty} \phi(x_0)\right] -g(\phi(x_0)) \geq 0.
$$
Let us suppose, for sake of contradiction, that the inequality does not hold. Then, by continuity there exists  $r>0$ small enough such that
$$
   -\left[|\nabla \phi(x)|^{p-2}\Delta \phi(x) + (p-2)|\nabla \phi(x)|^{p-4}\Delta_{\infty} \phi(x)\right] -g(\phi(x)) < 0,
$$
provided that $x \in B_r(x_0)$. Now, we define the function
$$
   \Psi \defeq \phi+ \frac{1}{10}\mathfrak{m}, \quad \text{ where } \quad \mathfrak{m} \defeq \inf_{\partial B_r(x_0)} (u(x)-\phi(x)).
$$
Notice that $\Psi$ verifies $\Psi < u$ on $\partial B_r(x_0)$, $\Psi(x_0)> u(x_0)$ and
\begin{equation}\label{EqPsi}
 -\Delta_p \Psi(x) < g(\phi(x)).
\end{equation}
By extending by zero outside  $B_r(x_0)$, we may use $(\Psi-u)_{+}$ as a test function in \eqref{plap.g}. Moreover, since $u$ is a weak super-solution, we obtain
\begin{equation}\label{Eq3.4}
  \displaystyle \int_{\{\Psi>u\}} |\nabla u|^{p-2}\nabla u \cdot \nabla (\Psi-u) dx \geq  \int_{\{\Psi>u\}} g(u)(\Psi-u) dx.
\end{equation}
On the other hand, multiplying \eqref{EqPsi} by $\Psi- u$ and integrating by parts we get
\begin{equation}\label{Eq3.5}
  \displaystyle \int_{\{\Psi>u\}} |\nabla \Psi|^{p-2}\nabla \Psi \cdot \nabla (\Psi-u) dx <  \int_{\{\psi>u\}} g(\phi)(\Psi-u) dx.
\end{equation}
Next, subtracting \eqref{Eq3.5} from  \eqref{Eq3.4} we obtain
\begin{align} \label{exxx}
  \int_{\{\Psi>u\}} (|\nabla \Psi|^{p-2}\nabla \Psi - |\nabla u|^{p-2}\nabla u) \cdot \nabla (\Psi-u) dx <  \int_{\{\psi>u\}} \mathcal{G}_{\phi}(u)(\Psi-u)dx,
\end{align}
where we have denoted $\mathcal{G}_{\phi}(u)=g(\phi)-g(u)$.
Finally, since the left hand side in \eqref{exxx} is bounded by below by
$$
  \mathfrak{C}(p)\int_{\{\Psi>u\}} |\nabla \Psi- \nabla u|^pdx,
$$
and the right hand side in \eqref{exxx} is negative, we can conclude that $\Psi \leq u$ in $B_r(x_0)$. However, this contradicts the fact that $\Psi(x_0)>u(x_0)$. Such a contradiction proves that $u$ is a viscosity super-solution.

An analogous argument can be applied to treat the sub-solution case.
\end{proof}

\section{The limiting problem: Proof of Theorem \ref{teo.eq}}\label{LimProb}

In this section we deal with the limit equation obtained as $p\to\infty$ in \eqref{ecu}. We prove that, as $p\to\infty$, weak solutions of \eqref{ecu} converge uniformly to a limit function which, in fact, is characterized to satisfy \eqref{ecu.infty} in the viscosity sense.

\begin{proof}[Proof of Theorem \ref{teo.eq}] First of all, we prove that the limiting function $u_{\infty}$ is $\infty$-harmonic in its null set, i.e.,
$$
  - \Delta_{\infty} u_{\infty}(x) = 0 \quad \text{in} \quad \{u_{\infty} = 0\} \cap \Omega.
$$
To this end, let $x_0 \in \{u_{\infty} = 0\} \cap \Omega$ and $\phi \in C^2(\Omega)$ such that $u_{\infty}-\phi$ has a strict local maximum (resp. strict local minimum) at $x_0$. Since, up to subsequence,  $u_p \to u_{\infty}$ local uniformly, there exists a sequence $x_p \to x_0$ such that $u_p-\phi$ has a local maximum (resp. local minimum) at $x_p$. Moreover, if $u_p$ is a weak solution (consequently a viscosity solution according to Lemma \ref{EquivSols}) to \eqref{ecu} we obtain
$$
  -\left[|\nabla \phi(x_p)|^{p-2}\Delta \phi(x_p) + (p-2)|\nabla \phi(x_p)|^{p-4}\Delta_{\infty} \phi(x_p)\right] \leq g(u(x_p)) \quad (\text{resp.}\,\, \geq ).
$$
Now, if $|\nabla \phi(x_0)| \neq 0$ we may divide both sides of the above inequality by $(p-2)|\nabla \phi(x_p)|^{p-4}$ (which is different from zero for $p$ large enough). Thus, we obtain that
$$
 - \Delta_{\infty} \phi(x_p) \leq \frac{|\nabla \phi(x_p)|^2 \Delta \phi(x_p)}{p-2} + \frac{g( u(x_p))}{(p-2)|\nabla \phi(x_p)|^{p-4}} \quad (\text{resp.}\,\, \geq ),
$$
where the RHS tends to zero as $p \to \infty$, because $g(u(x_p)) \to g(u(x_0)) = 0$. Therefore,
$$
    - \Delta_{\infty} \phi(x_0) \leq 0 \quad (\text{resp.}\,\, \geq 0),
$$
and since such an inequality is immediately satisfied if $|\nabla \phi(x_0)| = 0$ we conclude that $u_{\infty}$ is a viscosity sub-solution (resp. super-solution) to the desired equation.

Next, we will prove that $u_{\infty}$ is a viscosity solution to
$$
  \max\{- \Delta_{\infty} u_{\infty}(x), -|\nabla u_{\infty}(x)|+\beta_{\infty} u^{-}_{\infty}(x)\} = 0 \quad \text{in} \quad \{u_{\infty}<0\} \cap \Omega.
$$
First let us prove that $u_{\infty}$ is a viscosity super-solution.
Fix $x_0 \in \{u_{\infty}<0\} \cap \Omega$ and let  $\phi \in C^2(\Omega)$ be a test function  such that $u_{\infty}(x_0) = \phi(x_0)$ and the inequality $u_{\infty}(x) > \phi(x)$ holds for all $x \neq x_0$.
We want to show that
$$
  - \Delta_{\infty} \phi(x_0) \geq 0 \quad \text{or} \quad -|\nabla \phi(x_0)|+\beta_{\infty} \phi^{-}(x_0) \geq 0.
$$
Notice that if $|\nabla \phi(x_0)| = 0$ there is nothing to prove. Hence, as a matter of fact, we may assume that
\begin{equation}\label{eq5.1}
  -|\nabla \phi(x_0)|+\beta_{\infty} \phi^{-}(x_0)<0.
\end{equation}
As in the previous case, there exists a sequence $x_p \to x_0$ such that $u_p-\phi$ has a local minimum at $x_p$. Since $u_p$ is a weak super-solution (consequently a viscosity super-solution according to Lemma \ref{EquivSols}) to \eqref{ecu} we get
$$
  -\left[|\nabla \phi(x_p)|^{p-2}\Delta \phi(x_p) + (p-2)|\nabla \phi(x_p)|^{p-4}\Delta_{\infty} \phi(x_p)\right] \geq -\beta_p(u_p^{-}(x_p))^{p-1}.
$$
Now, dividing both sides by $(p-2)|\nabla \phi(x_p)|^{p-4}$ (which is different from zero for $p$ large enough due to \eqref{eq5.1}) we get
$$
  - \Delta_{\infty} \phi(x_p) \geq - \frac{|\nabla \phi(x_p)|^2 \Delta \phi(x_p)}{p-2} -\left( \frac{\beta_p^{\frac{1}{p-4}}u_p^{-}(x_p)}{|\nabla \phi(x_p)|}\right)^{p-4}\frac{(u_p^{-})^3(x_p)}{p-2}.
$$
Passing the limit as $p \to \infty$ in the above inequality we conclude that
$$
- \Delta_{\infty} \phi(x_0) \geq 0.
$$
That proves that $u_{\infty}$ is a viscosity super-solution.

Now, we will analyze the another case. To this end, fix $x_0 \in \{u_{\infty}<0\} \cap \Omega$ and a test function $\phi \in C^2(\Omega)$ such that $u_{\infty}(x_0) = \phi(x_0)$ and the inequality $u_{\infty}(x) < \phi(x)$ holds for $x \neq x_0$.
We want to prove that
\begin{equation}\label{eq5.2}
  - \Delta_{\infty} \phi(x_0) \leq 0 \quad \text{and} \quad -|\nabla \phi(x_0)|+\beta \phi^{-}(x_0) \leq 0.
\end{equation}
Again, as before, there exists a sequence $x_p \to x_0$ such that $u_p-\phi$ has a local maximum at $x_p$ and since $u_p$ is a weak sub-solution (resp. viscosity sub-solution) to \eqref{ecu}, we have that
$$
  -\frac{|\nabla \phi(x_p)|^2 \Delta \phi(x_p)}{p-2} - \Delta_{\infty} \phi(x_p) \leq  -\left( \frac{\beta_p^{\frac{1}{p-4}}u_p^{-}(x_p)}{|\nabla \phi(x_p)|}\right)^{p-4}\frac{(u_p^{-})^3(x_p)}{p-2} \leq 0.
$$
Thus, we obtain $- \Delta_{\infty} \phi(x_0) \leq 0$ letting $p \to \infty$. If $-|\nabla \phi(x_0)|+\beta_{\infty} \phi^{-}(x_0) > 0$, as $p \to \infty$, then the right hand side goes to $-\infty$, which clearly yields a contradiction  because $\phi \in C^2(\Omega)$. Therefore \eqref{eq5.2} holds.

The last part of the proof consists in proving that $u_{\infty}$ is a viscosity solution to
$$
  \min\{- \Delta_{\infty} u_{\infty}(x), |\nabla u_{\infty}(x)|-\alpha_{\infty} u^{+}_{\infty}(x)\} = 0 \quad \text{in} \quad \{u_{\infty}>0\} \cap \Omega.
$$
The argument is similar to the previous case and for this reason we will omit it.
\end{proof}

\section{Characterization of $\Sigma_\infty$: Proof of Theorems \ref{teo.1d} and \ref{teo.rn}}\label{sec4}

\subsection{The one-dimensional case} \label{sect-1-d}

As we pointed out in the introduction, the spectrum of \eqref{ecu} as $p\to\infty$ is completely understood when $n=1$ since, in this case, the structure of $\Sigma_p$  is explicitly determined. When $\Omega=(0,1)$,  $\Sigma_p$ it is composed by the two trivial lines
$$
\C_{1,p}^+= \R\times \{\lam_{1,p}\}, \qquad \C_{1,p}^-=  \{\lam_{1,p}\} \times \R,
$$
and the family of hyperbolic-like curves
$$
\C_{k,p}\, : \, \alpha_p^{-1/p}+\beta_p^{-1/p} = \frac{2}{k\pi_p }
$$
when $k$ is even, and
\begin{align*}
&\C_{k,p}^+\, : \, \frac{k-1}{2}\alpha_p^{-1/p}+\frac{k+1}{2}\beta_p^{-1/p} = \frac{1}{\pi_p }, \\
&\C_{k,p}^-\, : \, \frac{k+1}{2}\alpha_p^{-1/p}+\frac{k-1}{2}\beta_p^{-1/p} = \frac{1}{\pi_p }
\end{align*}
when $k$ is odd. Here $\pi_p$ is given by
$$
 \pi_p =  2(p-1)^{1/p} \int_0^1 \frac{ds}{(1-s^p)^{1/p}}.
$$

Observe that, since the eigenvalues of \eqref{plap} are explicitly given by $\lam_{k,p}=(k\pi_p)^p$, $k\in\N,$  the curves $\C_k^\pm$ can be rewritten in terms of them as
\begin{align*}
	&\C_{k,p}\, : \,   \left( \frac{\lam_{k,p}}{\alpha_p} \right)^\frac1p + \left( \frac{\lam_{k,p}}{\beta_p} \right)^\frac1p=2 \qquad &\mbox{ if } k \mbox{ is even} \\\
	&\C_{k,p}^+\, : \, \left( \frac{\lam_{(k-1)/2,p}}{\alpha_p} \right)^\frac1p + \left( \frac{\lam_{(k+1)/2,p}}{\beta_p} \right)^\frac1p=1\qquad &\mbox{ if } k \mbox{ is odd}\\
	&\C_{k,p}^-\, : \, \left( \frac{\lam_{(k+1)/2,p}}{\alpha_p} \right)^\frac1p + \left( \frac{\lam_{(k-1)/2,p}}{\beta_p} \right)^\frac1p=1.
\end{align*}

\begin{proof}[Proof of Theorem \ref{teo.1d}]

In view of \eqref{lam2}, the trivial lines $\C_{1,p}^\pm$ converge to
$$
	\C_{1,\infty}^+=\R\times \{ 2 \}, \qquad \C_{1,\infty}^-= \{2\}\times \R.
$$
as $p\to\infty$, since $\pi_p\to 2$ as $p\to \infty$.

Let us analyze the nontrivial curves $\C_{k,p}^+$ as $p\to\infty$ for $k$ odd.  According to the previous expressions, this hyperbolic curve  can be parametrized as
\begin{align*}
\mathcal{C}_{k,p}^+=\{ (\alpha_p(s),\beta_p(s)), \, s\in \R^+\}
\end{align*}
where
$$
\alpha(s)=\left(\lam_{\frac{k-1}{2},p}^\frac1p + s^{-1} \lam_{\frac{k+1}{2},p}^\frac1p\right)^p, \qquad \beta(s)=s^p \alpha(s).
$$
Here $(\alpha_p(s),\beta_p(s))$ denotes the intersection between $\C_{k,p}$ and the line of slope $s^p$
passing through the origin in $\R^2$. Observe that when $s=1$, it follows that  $\alpha_p=\beta_p=(k\pi_p)^p=\lam_{k,p}$.

If we define the curve
$$
	\C_{k,\infty}^+ \defeq \{(\alpha_\infty(s),\beta_\infty(s)), \, s\in \R^+\}
$$
where
$$
\alpha_\infty(s) \defeq \lim_{p\to\infty} \alpha_p(s)^{1/p}, \qquad \beta_\infty(s)\defeq \lim_{p\to\infty} \beta_p(s)^{1/p},
$$
from \eqref{lam1}, we get
\begin{align*}
\alpha_\infty(s)=k-1+s^{-1}(k+1), \qquad
\beta_\infty(s)= k+1+s(k-1).
\end{align*}

Observe that, from \eqref{lamk.1d}, we have that $\lam_{k,\infty}=2k$. In particular, when $s=1$, the curve $\C_{k,\infty}^+$ passes through the point $(2k,2k)$, as expected.

The previous expressions lead to the formula for $\C_{k,\infty}^+$ in the case in which $k$  is odd. In a similar way can be obtain the formulas for the remaining curves, and the proof is complete.
\end{proof}

\subsection{The general case}\label{GenCase}

As it was described in the introduction, one immediately observe that $\Sigma_p$ contains the trivial lines $\{\lam_{1,p}\}\times \R$ and $\R \times \{\lam_{1,p}\}$, being $\lam_{1,p}$  the first eigenvalue given by \eqref{eqEigen}.  However, in contrast with the one-dimensional case, where a full description of the spectrum is available, when $n>1$ it is only known the existence of a curve $\C_{2,p}$ beyond the trivial lines. Such a curve has an hyperbolic shape and it is proved to be variational (see for instance \cite{dF-Go-94}). As far as we are concerned, we will consider the characterization given in \cite{terra}, in which the intersection of $\C_{2,p}$ with the line of slope $t\in \R^+$ passing through the origin in $\R^2$ can be written as
\begin{equation} \label{par.ab}
 	 (\alpha_p(t),\beta_p(t))=(t^{-1} c_p(t),c_p(t)),\quad  t\in\R^+
\end{equation}
where
$$
 c_p(t)=\inf_{\PP_2} \max\{t \lam_{1,p}(\omega_1), \lam_{1,p}(\omega_2)\}, \qquad t\in \R^+
$$
being $\PP_2=(\omega_1, \omega_2)$  the class of partitions in two disjoint, connected, open subsets of $\Omega$, and $\lam_{1, p}(\omega)$ the first eigenvalue of \eqref{plap} in $\Omega=\omega$.

\begin{proof} [Proof of Theorem \ref{teo.rn}]
The proof follows taking limit as $p\to\infty$ in the characterization of the curves $\C_{1,p}^\pm$ and $\C_{2,p}$.

The expression for $\C_{1,\infty}^\pm$ follows from \eqref{lam1}.
Now, if we define the function
$$
c_\infty(t)\defeq \lim_{p\to\infty} \inf_{\PP_2} \max\Big\{t \lam_{1,p}(\omega_1)^\frac1p, \lam_{1,p}(\omega_2)^\frac1p\Big\}, \qquad t\in \R^+
$$
again, from \eqref{lam1}, the following characterization holds
\begin{equation} \label{c.infty}
c_\infty(t)=\inf_{\PP_2} \max\left\{\frac{t}{r_1}, \frac{1}{r_2}\right\}, \qquad t\in \R^+
\end{equation}
where, given $(\omega_1, \omega_2)\in\PP_2(\Omega)$,  $r_i$ is the radius of the biggest ball contained in $\omega_i$, for $i=1,2$.

For $t\in \R^+$, defining the functions
\begin{equation} \label{curva2}
\alpha_\infty(t)=t^{-1} c_\infty(t), \quad \beta_\infty(t)=c_\infty(t),
\end{equation}
from \eqref{par.ab} it follows the desired parametrization of $\C_{2,\infty}$.

Now, let us see that, in fact, there is no point in $\Sigma_\infty$ between the trivial lines and $\C_{2,\infty}$, i.e., the first nontrivial curve is lower isolated when it detaches from the trivial lines. Suppose otherwise, that there is some  $(\alpha_0,\beta_0)$ strictly between these curves, and denote $u$ the corresponding eigenfunction. Observe that $u$ cannot have constant sign in $\Omega$, since this would imply that $(\alpha_0,\beta_0)\in \C_{1,\infty}^\pm$. Then, there exists a nontrivial partition $(p_1,p_2)\in \PP_2(\Omega)$ such that $u>0$ in $p_1$ and $u<0$ in $p_2$. Now, if we consider $t_0=\frac{\rho_1}{\rho_2}$, it is clear that the inequalities
$$
	\beta(t_0)> \beta_0(t_0), \qquad \alpha(t_0)> \alpha_0(t_0)
$$
are strict, being $(\alpha(t_0),\beta(t_0))\in \C_{2,\infty}$  and $\rho_i$ the radius of the biggest ball inside $p_i$, $i=1,2$. However, by the definition of the first nontrivial curve \eqref{curva2}, it must hold that
$$
\beta(t_0) = c_\infty(t_0)=\inf_{\PP_2} \max\left\{ \frac{t_0}{r_1}, \frac{1}{r_2}\right\} =  \max\left\{ \frac{t_0}{\rho_1} , \frac{1}{\rho_2}\right\} = \beta_0(t_0),
$$
a contradiction. Consequently, $\C_{2,\infty}$ is lower isolated when it is different from the trivial lines.

Finally, we observe the following: if we take a ball of radius $$\mathfrak{r}(\Omega)=\max\limits_{x\in\Omega} \dist(x,\partial \Omega)$$ inside $\Omega$ and there is some room left, that is, $\Omega \setminus \overline{B_{\mathfrak{r}}}
\not=\emptyset$, then we can consider as a partition of $\Omega$ the sets $\omega_1 = B_{\mathfrak{r}}$ and
$\omega_2 = \Omega \setminus \overline{B_{\mathfrak{r}}}$. From our previous arguments we get that the points
$(1/\mathfrak{r}(\Omega) , 1/ \mathfrak{r} (\Omega \setminus \overline{B_{\mathfrak{r}}}))$ and $( 1/ \mathfrak{r} (\Omega \setminus \overline{B_{\mathfrak{r}}}), 1/\mathfrak{r}(\Omega) )$ belong to
$\C_{2,\infty} \cap \C_{1,p}^\pm$. Therefore, the curve $\C_{2,\infty}$ intersects the trivial lines
when $\Omega \setminus \overline{B_{\mathfrak{r}}}
\not=\emptyset$ and we just observe that this holds for every domain that is different from a ball.
\end{proof}
As a corollary, the following properties  are fulfilled by $\C_{2,\infty}$.

\begin{corollary}\label{cor4.1} The following statements hold true:
\begin{enumerate}
  \item[(a)] $\C_{2,\infty}$ is a continuous and non-increasing curve, symmetric with respect to the
diagonal.
  \item[(b)] $\C_{2,\infty} \subset \left\{(x, y) \in \R^2 \suchthat x, y \geq \lambda_{1, \infty}(\Omega) \right\} \setminus \left\{(x, y) \in \R^2 \suchthat x, y > \lambda_{2, \infty}(\Omega)\right\}$.
  \item[(c)] (Courant nodal domain theorem) Any eigenfunction associated to $(x, y)\in \C_{2,\infty} \setminus
  \C_{1,p}^\pm $ admits exactly two nodal domains.
\end{enumerate}
\end{corollary}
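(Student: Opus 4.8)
The plan is to extract the three items from the partition characterisation \eqref{c.infty} of $\C_{2,\infty}$, using repeatedly $\lambda_{1,\infty}(\Omega)=1/\mathfrak r(\Omega)$ and $\lambda_{2,\infty}(\Omega)=1/\mathfrak R(\Omega)$ from \eqref{lam1}--\eqref{lam2}. For part (a) I would first rewrite the coordinate functions along $\C_{2,\infty}$ as $\beta_\infty(t)=c_\infty(t)=\inf_{\PP_2}\max\{t/r_1,\,1/r_2\}$ and $\alpha_\infty(t)=c_\infty(t)/t=\inf_{\PP_2}\max\{1/r_1,\,1/(t\,r_2)\}$. For a fixed partition the first quantity is non-decreasing in $t$ and the second non-increasing, and infima inherit these monotonicities; hence $\beta_\infty$ is non-decreasing, $\alpha_\infty$ non-increasing, and the curve is non-increasing. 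Continuity I would get by localisation: on a compact interval $[a,b]\subset\R^+$ any partition with $r_1$ too small has value (already at $t=a$) above that of a fixed reference partition, so the infimum may be restricted to partitions with $r_1$ bounded below by a positive constant; those competitors are then uniformly Lipschitz on $[a,b]$, hence so is $c_\infty$, and $\alpha_\infty(t)=\beta_\infty(t)/t$ is continuous on $\R^+$. Symmetry about the diagonal follows from the substitution $t\mapsto 1/t$ together with the exchange $\omega_1\leftrightarrow\omega_2$ (admissible because $\PP_2$ is invariant under it): this gives $c_\infty(1/t)=\inf_{\PP_2}\max\{1/r_1,\,1/(t\,r_2)\}=\alpha_\infty(t)$, so $(\alpha_\infty(1/t),\beta_\infty(1/t))=(\beta_\infty(t),\alpha_\infty(t))$.

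For part (b), the lower bound is immediate: every piece $\omega_i$ of a partition lies in $\Omega$, so $r_i\le\mathfrak r(\Omega)$ and $1/r_i\ge\lambda_{1,\infty}(\Omega)$, and inserting this into \eqref{c.infty} gives $\alpha_\infty(t),\beta_\infty(t)\ge\lambda_{1,\infty}(\Omega)$ for all $t$. For the upper bound I would evaluate at $t=1$: given two disjoint balls of radius $r<\mathfrak R(\Omega)$ in $\Omega$, complete them to a connected partition $(\omega_1,\omega_2)$ with $r_i\ge r$, so $c_\infty(1)\le 1/r$; letting $r\nearrow\mathfrak R(\Omega)$ gives $c_\infty(1)\le\lambda_{2,\infty}(\Omega)$. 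Since by part (a) the curve is non-increasing, symmetric about the diagonal and passes through $(c_\infty(1),c_\infty(1))$, one has $\min\{\alpha_\infty(t),\beta_\infty(t)\}\le c_\infty(1)\le\lambda_{2,\infty}(\Omega)$ for every $t$, so $\C_{2,\infty}$ misses $\{(x,y):x,y>\lambda_{2,\infty}(\Omega)\}$.

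For part (c), let $u$ be an eigenfunction of $(x,y)=(\alpha_\infty,\beta_\infty)\in\C_{2,\infty}$ not lying on the trivial lines $\C_{1,\infty}^{\pm}$. If $u$ had constant sign, say $u>0$ in $\Omega$, then \eqref{ecu.infty} collapses in $\Omega$ to $\min\{-\Delta_{\infty}u,\,|\nabla u|-\alpha_\infty u\}=0$ with $u=0$ on $\partial\Omega$; this is \eqref{plap.infty} with $\alpha_\infty$ replacing $\lambda_{1,\infty}$, and the $\infty$-eigenvalue theory recalled in the Introduction then forces $\alpha_\infty=\lambda_{1,\infty}(\Omega)=1/\mathfrak r$, i.e. $(x,y)\in\C_{1,\infty}^{-}$, a contradiction (symmetrically for $u<0$). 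Hence $u$ changes sign and has at least one positive and one negative nodal domain, so at least two. For the upper bound, the structural input is that on every connected component $P$ of $\{u>0\}$ the restriction $u|_P$ solves the positive $\infty$-eigenvalue problem on $P$ with parameter $\alpha_\infty$, whence $\mathfrak r(P)=1/\alpha_\infty$; likewise $\mathfrak r(N)=1/\beta_\infty$ for every component $N$ of $\{u<0\}$. Moreover the viscosity gluing across $\{u=0\}$ in \eqref{ecu.infty} prevents a flat zero region from abutting $\{u\neq 0\}$, so the nodal set has empty interior and positive components border only negative ones (and conversely). I would then argue: with $t_0=\beta_\infty/\alpha_\infty$, completing one positive component $P$ and one negative component $N$ to a partition of $\Omega$ shows $c_\infty(t_0)\le\beta_\infty$, with equality since $(x,y)\in\C_{2,\infty}$; if there were a third nodal domain one could, exploiting that $1/\alpha_\infty<\mathfrak r(\Omega)$ and $1/\beta_\infty<\mathfrak r(\Omega)$ (because $(x,y)\notin\C_{1,\infty}^{\pm}$, using part (b)) and the surplus inball it provides, build a connected partition of $\Omega$ with value strictly below $\beta_\infty$, contradicting the equality. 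Thus $u$ has exactly two nodal domains, which is the Courant-type statement. An alternative route is to pass to the limit in the $p$-Laplacian second Fu{\v{c}}{\'{\i}}k curve, whose eigenfunctions are known to have exactly two nodal domains, along the uniform convergence $u_p\to u_\infty$ of Theorem~\ref{teo.eq}.

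The step I expect to be the main obstacle is this last part of (c) — the strict partition inequality (equivalently, that eigenfunctions on $\C_{2,\infty}\setminus\C_{1,\infty}^{\pm}$ have \emph{at most} two nodal domains). The difficulty is that all same-sign nodal domains share the same inradius, so the naive partition ``positives against negatives'' only yields the non-strict bound $c_\infty(t_0)\le\beta_\infty$ and, since distinct same-sign nodal domains are mutually disconnected, is typically not even an admissible (connected) partition. One must therefore use the finer structure — equal slopes of the meeting cones at the nodal interface, the thinness of $\{u=0\}$, and the extra inradius provided by the third nodal domain — to engineer a genuinely better connected partition, while simultaneously controlling the connectedness of both of its pieces. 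In the limit-from-$p$ approach the analogous difficulty is to show that the number of nodal domains does not increase in the uniform limit, which again requires a careful localisation near the nodal set. Everything else reduces to bookkeeping with \eqref{c.infty} and the identities \eqref{lam1}--\eqref{lam2}.
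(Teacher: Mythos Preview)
Your treatment of parts (a) and (b) is correct and follows the same route as the paper, only with considerably more detail: the paper simply asserts that ``continuity and monotonicity of $\C_{2,\infty}$ follow from the definition of $c_\infty(t)$'' and obtains symmetry by swapping $\omega_1$ and $\omega_2$; for (b) it uses the non-increasing nature of the curve exactly as you do (phrased as a path argument from $(\lambda_{2,\infty},\lambda_{2,\infty})$). Your Lipschitz argument for continuity and your computation $\min\{\alpha_\infty(t),\beta_\infty(t)\}\le c_\infty(1)\le\lambda_{2,\infty}$ are legitimate fleshings-out of what the paper leaves implicit.

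For part (c) the contrast is sharper. The paper's entire proof is the sentence ``by construction, any eigenfunction corresponding to a point of the curve admits exactly two nodal domains.'' In other words, the paper reads ``eigenfunction associated to $(x,y)\in\C_{2,\infty}$'' narrowly, as one arising from the construction of $\C_{2,\infty}$ itself --- i.e.\ as a uniform limit of $p$-Fu{\v c}{\'\i}k eigenfunctions on $\C_{2,p}$ (each of which, by the classical Courant-type result for the first nontrivial curve, has exactly two nodal domains), or equivalently as built from an optimal two-partition $(\omega_1,\omega_2)$. Under that reading the two-nodal-domain property is tautological. Your ``alternative route'' at the end is precisely this, and it is what the paper intends; the paper does not engage with the stability-of-nodal-count issue you raise.

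You, by contrast, attempt the genuinely stronger statement that \emph{every} viscosity solution of \eqref{ecu.infty} with parameters on $\C_{2,\infty}\setminus\C_{1,\infty}^\pm$ has exactly two nodal domains, and you correctly diagnose the obstacle: same-sign components all share the same inradius, so naive partitions give only the non-strict bound, and connectedness of the competitor partition is not free. That argument, as you say, is incomplete --- but it is also not what the paper proves. So there is no gap in your proposal relative to the paper; rather, you are aiming at a harder target than the paper does, and should either adopt the paper's narrow reading (and then your last paragraph \emph{is} the proof) or flag the stronger claim as open.
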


\begin{proof}
Symmetry of $\C_{2,\infty}$ arises from interchanging the roles of $\omega_1$ and $\omega_2$ in the the expression of $c_\infty(t)$. Continuity and monotonicity of $\C_{2,\infty}$ follow from the definition of $c_\infty(t)$.

The curve $\C_{2,\infty}$ always is above or coincides with the trivial lines since $\lam_{1,\infty}(\Omega)\leq c_\infty(\Omega)$. Furthermore, any point belonging to $\C_{2,\infty}$ does not belong to $\{x,y>\lam_{2,\infty}\}$. In fact,
since $\C_{2,\infty}$ is continuous, all path linking $(\lam_{2,\infty},\lam_{2,\infty})$ to any point in $\{x,y>\lam_{2,\infty}\}$ should increase at some moment, which would contradict the non-increasing nature of the curve.

Finally, by construction, any eigenfunction corresponding to a point of the curve admits exactly two nodal domains.
\end{proof}

\begin{remark}
{\rm Let $(\alpha, \beta)$ a point in $ \C_{2,\infty} \cap
  \C_{1,p}^\pm $, that is, for example a point of the form $(1/\mathfrak{r}(\Omega), \beta)$
  with $\beta$ large. For those points there are at least two different eigenfunctions (this
  point of the spectrum is not simple). In fact, there is a positive eigenfunction (that comes
  from the limit as $p\to \infty$ in the first eigenvalue for the $p-$Laplacian) and another one
  that changes sign (that can be obtained from our construction since we assumed that
  $(\alpha, \beta)\in \C_{2,\infty}$). Therefore, $\Sigma_\infty$ has eigenvalues with
  multiplicity on the trivial lines (this fact does not happen for $\Sigma_p$).}
\end{remark}

\section{Classifying the $\infty$-{F}u{\v{c}}ik spectrum}\label{ClasFucikSpect}

In this section we will study different families of domains based on the shape of the curve $\C_{2,\infty}$. As we will see, given a domain $\Omega\subset \R^n$, this classification will depend only on $\mathfrak{r}(\Omega)$, the radius of the biggest ball contained in $\Omega$, and $ \mathfrak{R}(\Omega)$, the maximum radius of a couple of balls of the same size fitted inside $\Omega$.

Regardless the configuration of $\Omega$, it always holds that the lines $y= \frac{1}{\mathfrak{r}(\Omega)}$ and $x=\frac{1}{\mathfrak{r}(\Omega)}$ define the trivial lines in the spectrum, and that the point $\left(\frac{1}{
\mathfrak{R}(\Omega)}, \frac{1}{\mathfrak{R}(\Omega)}\right)$ belongs to $\Sigma_\infty$. As we will see, the shape of $\C_{2,\infty}$ depends on the relation between $\mathfrak{r}(\Omega)$ and $\mathfrak{R}(\Omega)$.

Hereafter, given a partition $(\omega_1, \omega_2)\in \PP_2(\Omega)$ we will denote $r_1$ and $r_2$ the radii of the biggest balls contained in each component.

We will distinguish two classes of domain depending on whether the corresponding curve $\C_{2,\infty}$ intersects or not the trivial lines.

\begin{figure}[ht]
\begin{center}
\includegraphics[width=13.1cm,height=4.8cm]{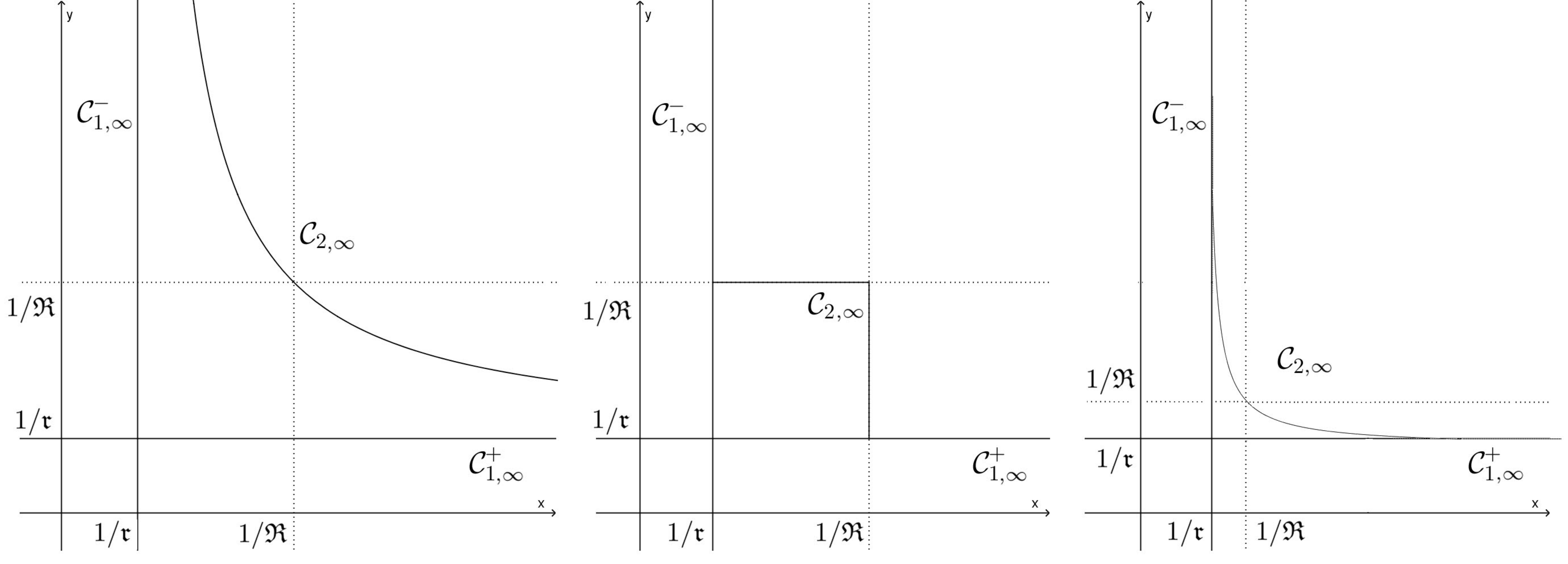}
\caption{The first three curves in $\Sigma_\infty$ for a domain type I (left) and type II (middle and right).}
\label{dib3}
\end{center}
\end{figure}

\subsection{Type I}
 Here lies the domain $\Omega$ whose curve $\C_{2,\infty}$ is hyperbolic and asymptotic to the trivial lines.
 As we have seen the only possibility is a ball.

\begin{example}[{\bf A ball}]
Let us consider the domain $\Omega$ given by a open ball of radius $R$ in $\R^n$. It is immediate that $\lam_{1,\infty}(\Omega)=\frac{1}{R}$.

Since the radii of two tangential balls fitted in $\Omega$ must satisfy $r_1+r_2=R$, the expression of \eqref{c.infty} will be minimized  will be minimized when $\frac{t}{r_1}=\frac{1}{r_2}$, i.e.,
$$
t=\frac{r_1}{R-r_1}, \qquad \text{ which implies } \quad  r_1=\frac{Rt}{1+t}.
$$
In such case it follows that
$$c_\infty(t)=\frac{1+t}{R}.$$

Finally, observe that values of $t$ approaching zero correspond to a partition $(\omega_1, \omega_2)\in \PP(\Omega)$ in which the biggest ball in $\omega_2$ is almost the whole $\Omega$ and the biggest one in $\omega_1$ is very small; values of $t$ approaching $+\infty$ correspond to a partition in which the balls interchange their roles: the biggest ball in $\omega_1$ is almost the whole $\Omega$. See figure \ref{dib1}.

Consequently, according equation \eqref{curva.2}, the curve $\C_{2,\infty}$ is given by
$$
    \C_{2,\infty}=\left\{ \left( \frac{1+t}{Rt}, \frac{1+t}{R}\right),\, t\in \R^+\right\}.
$$
Observe that when $t=1$ the curve contains the point $\left(\frac{2}{R}, \frac{2}{R}\right)$, which is precisely corresponds to $(\lam_{2,\infty}(\Omega), \lam_{2,\infty}(\Omega))$.

\begin{figure}[ht]
\begin{center}
\includegraphics[width=13.1cm,height=4.4cm]{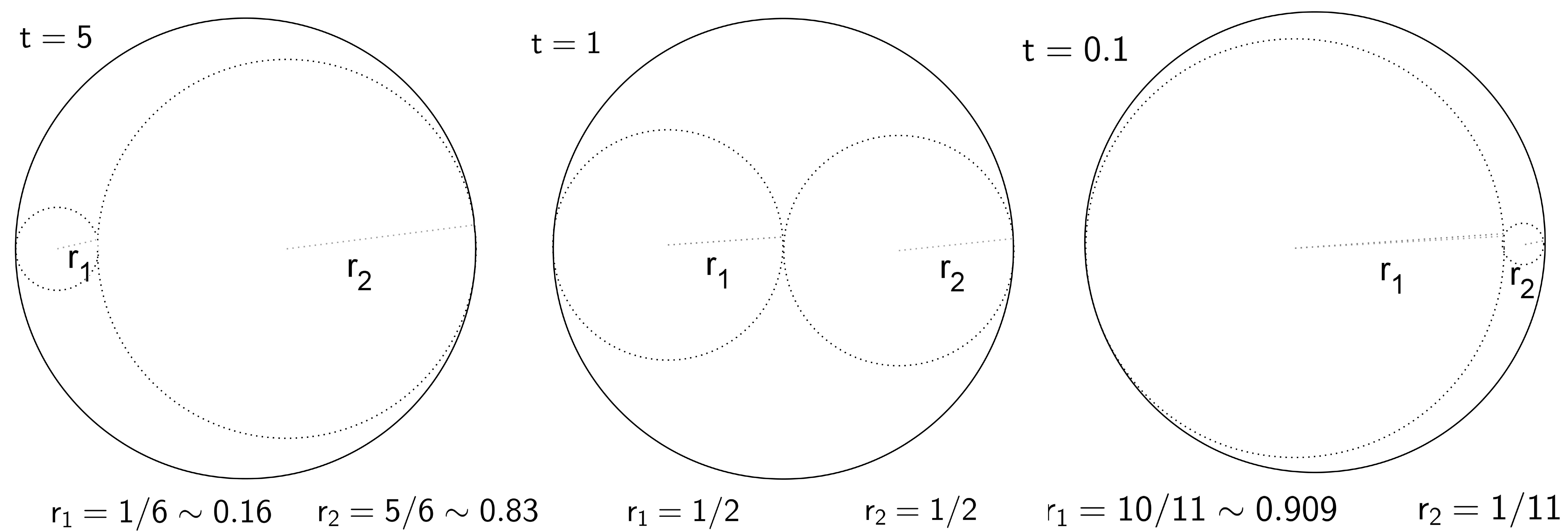}
\caption{Partitions corresponding to $t=1$ ($r_1\sim0.16$ and $r_2\sim 0.83$), $t=1$ ($r_1=r_2=0.5)$ and $t=0.1$ ($r_1\sim0.909$ and $r_2\sim 0.09$)} for $\Omega$ the unit ball .
 \label{dib1}
\end{center}
\end{figure}
\end{example}

\begin{example}[{\bf The unit interval}]
When $\Omega$ is considered to be an open interval in the real line, the picture of $\Sigma_\infty$ is analogous to $\Sigma_p$ for a fixed value of $p$, i.e., it consists in a sequence of hyperbolic-like curves, as it is showed in Figure \ref{dib0}.

Moreover, since explicit formulas for the eigenfunctions are known for a fixed value of $p$, it is possible to describe the profile of the limit problem. For instance, if $(\alpha,\beta) \in \C_{2,\infty}$, the corresponding eigenfunction $u_{\infty}$ will be given by
\begin{align*}
\lim_{p\to\infty} u_p(x) = u_{\infty}(x)=
\begin{cases}
x &\quad \text{in } \left(0,\frac{\ell}{2}\right]\\
-x+\ell &\quad \text{in } \left[\frac{\ell}{2},\frac{\ell+1}{2}\right]\\
x-1 &\quad \text{in } \left[\frac{\ell+1}{2}, 1\right)
\end{cases}
\end{align*}
where $\ell\in(0,1)$
and
\begin{align*}
u_p(x)=
\begin{cases}
\displaystyle \sin_p\left(\frac{\pi_p x}{\ell}\right) &\quad \text{in } (0,\ell]\\
\displaystyle -\sin_p\left(\frac{\pi_p x}{1-\ell}\right) &\quad \text{in } [\ell,1).
\end{cases}
\end{align*}
Finally, notice that $u_{\infty}$ is a viscosity solution to \eqref{ecu.infty} with $(\alpha(l), \beta(l)) = \left( \frac{2}{l}, \frac{2}{1-l}\right)$.

\begin{figure}[ht]
\begin{center}
\includegraphics[width=11cm,height=7.5cm]{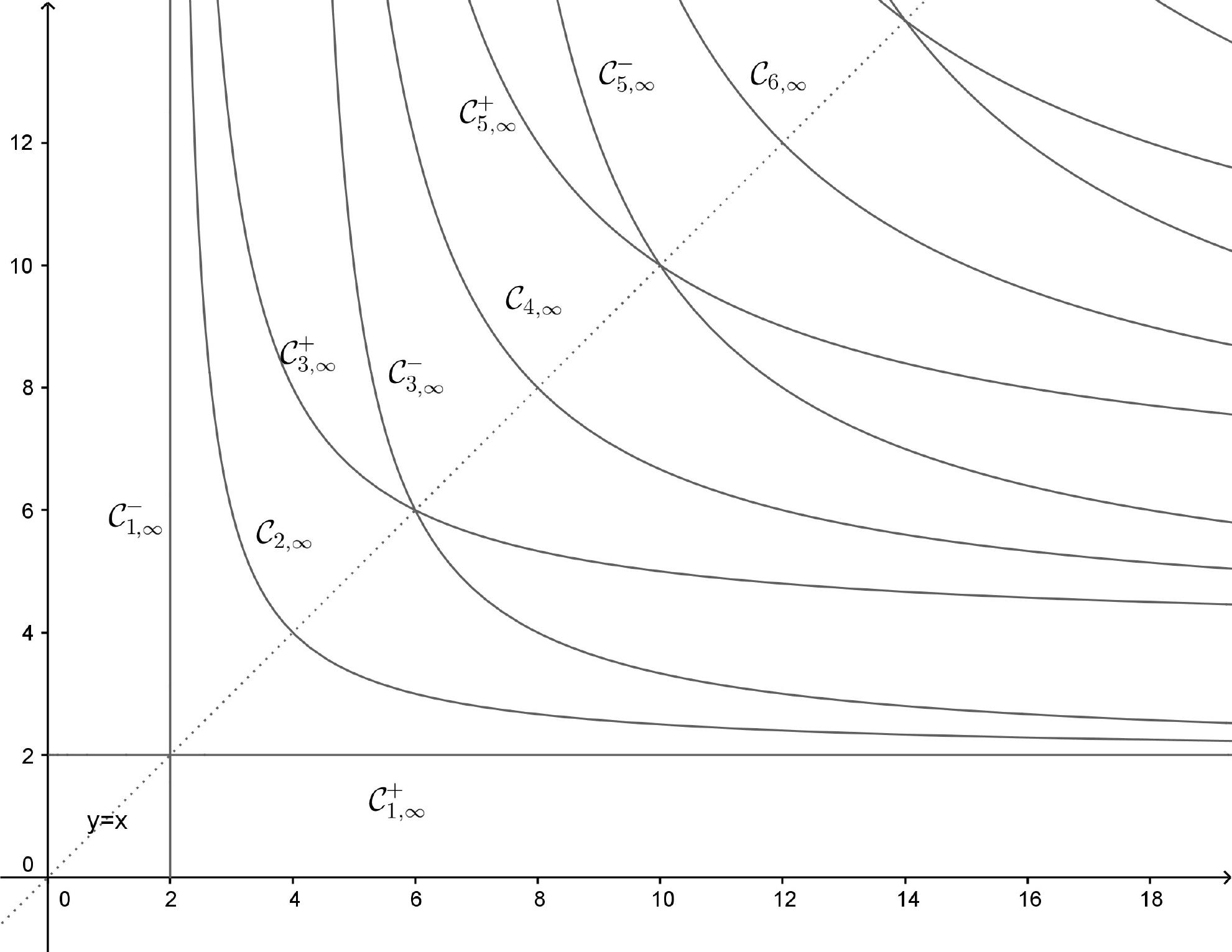}
\caption{ $\Sigma_\infty$ for the unit interval in $\R$.}
\label{dib0}
\end{center}
\end{figure}
\end{example}

\subsection{Type II}
 All domain $\Omega$ whose curve $\C_{2,\infty}$ intersects the trivial lines. We can also subdivide this category in $\C_{2,\infty}$ connecting the trivial lines (type II.A) and $\C_{2,\infty}$ totally contained in the trivial lines (type II.B).

\begin{example}[{\bf ``Linked'' balls}]
Given $R_1\leq R_2$, let us consider a domain $\Omega$  made as the union of the balls of radii $R_1$ and $R_2$ by means of a tube of length $\ve<R_1$.

Since the radius of the biggest ball contained in $\Omega$ is $R_2$, we get $\lam_{1,\infty}= \frac{1}{R_2}$. Now, the couple of biggest balls contained in $\Omega$ have radius $R_1$. If we fix $r$ such that $r=R_1$, the expression of $c_\infty$ will be minimized when $\frac{t}{r}=\frac{1}{R_1}$, that is, when both coefficients inside the maximum in the expression of $c_\infty$ are the same. In this case, $c_\infty=\frac{1}{R_1}$.

Observe that the case $r=R_1$, corresponds to $t=1$. As $r$ increases, the value of $t$ decreases. This process finishes when $r=R_2$, which corresponds to $t=\frac{R_2}{R_1}$.

Analogously, this process can be made interchanging the roles of $R_1$ and $R_2$, leading to the following expression
for the second non-trivial curve
$$
 \C_{2,\infty}=\left\{ \left( \frac{1}{tR_1}, \frac{1}{R_1}\right),\, 1 \leq t \leq \frac{R_2}{R_1} \right\} \cup \left\{ \left( \frac{1}{R_1}, \frac{1}{tR_1}\right),\, 1 \leq t \leq \frac{R_2}{R_1} \right\}.
$$

It is remarkable to see that in the extremal case $R_1=R_2$ the curve $\C_{2,\infty}$ is contained in the trivial curves.  This situation occurs when the radius of the biggest ball contained in $\Omega$ coincides with the radius of biggest couple of balls of the same size fitted in $\Omega$ (for example in an annular domain or more generally in a stadium domain).

It is also straightforward to see that the analysis made above only depends of radius of the biggest ball contained in $\Omega$ and the  radius of biggest couple of identical balls contained in $\Omega$. Consequently, the three domains exhibited in figure \ref{dib2} have the same first curves $\C_{1,\infty}^\pm$ and $\C_{2,\infty}$.

\begin{figure}[ht]
\begin{center}
\includegraphics[width=13cm,height=2.8cm]{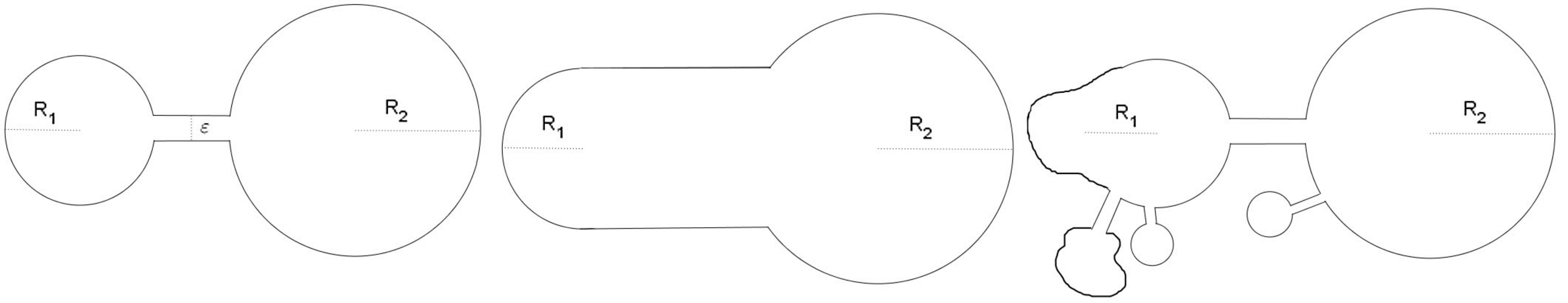}
\caption{Three domains for which the first three curves in $\Sigma_\infty$ coincide.}
\label{dib2}
\end{center}
\end{figure}
\end{example}

\begin{example}[{\bf The unit cube}] Let us consider $\Omega$ be the unit square in $\R^2$, $\Omega = (0,1)\times (0,1)$. In this case, since the biggest ball fitted in $\Omega$ has radius $R=\frac{1}{2}$ we have that $\lam_{1,\infty}(\Omega)=2$.

Let us analyze the second nontrivial curve. When we compute $c_\infty(t)$ we must consider two balls of radii $r_1$ and $r_2$ contained in $\Omega$ such that $\frac{t}{r_1}$ and $\frac{1}{r_2}$ coincide. Notice that for  $t=1$ we obtain
$$
  r_1=r_2=\frac{\sqrt{2}}{2(1+\sqrt{2})}.
$$
But we can also consider a partition such that $r_2$ increases and $r_1$ decreases; since both balls are fitted in $\Omega$, they must verify $$r_1+r_2=\frac{\sqrt{2}}{1+\sqrt{2}}.$$
In this case, if
$$
	t=\frac{1}{r_1}\frac{\sqrt{2}}{1+\sqrt{2}}-1
$$
we can guarantee that $\frac{t}{r_1}$ and $\frac{1}{r_2}$ coincide. Observe that the computations made to enlarge $r_1$ (and then to obtain a smaller $r_2$) with the previous expression of $t$ can be performed provided that $r_1\leq \frac{1}{2}$. This procedure gives that
\begin{equation} \label{tt1}
	c_\infty(t)=\frac{t}{r_1}=\frac{1}{r_2}=(t+1)\Big( 1+\frac{\sqrt{2}}{2} \Big), \qquad 	\frac{2\sqrt{2}}{1+\sqrt{2}}-1 \leq t \leq 1.
\end{equation}

Now we can fix the value $r_2=\frac{1}{2}$ to be the maximum radius of a ball fitted in $\Omega$ and to continue decreasing the value of $r_1$. In this case, when considering
$$
	t=2r_1
$$
we can assure that $\frac{t}{r_1}$ and $\frac{1}{r_2}$ coincides to be equal to $2$. This process can be continued as $r_2\to 0$ obtaining
\begin{equation} \label{tt2}
	c_\infty(t)= 2, \qquad 	0 \leq t \leq \frac{2\sqrt{2}}{1+\sqrt{2}}-1.
\end{equation}
From \eqref{tt1} and \eqref{tt2} we get that
$$
 \C_{2,\infty}^1= \left\{
 \begin{array}{ll}
 \displaystyle
 \left( \frac{2}{t}, 2\right) & \displaystyle\qquad 0 \leq t \leq \frac{2\sqrt{2}}{1+\sqrt{2}}-1\\
 \displaystyle \left( \frac{\tau(t+1)}{t}, \tau(t+1) \right) &
 \displaystyle \qquad \frac{2\sqrt{2}}{1+\sqrt{2}}-1 \leq t \leq 1
 \end{array} \right.
 $$
where $\tau=1+\frac{\sqrt{2}}{2}$.

Observe that this construction can be made, analogously, interchanging the roles of $r_1$ and $r_2$, leading to
$$
 \C_{2,\infty}^2=
 \left\{
 \begin{array}{ll}
 \displaystyle
 \left( 2, \frac{2}{t} \right) &\qquad \displaystyle 0 \leq t \leq \frac{2\sqrt{2}}{1+\sqrt{2}}-1\\
 \displaystyle \left( \tau(t+1), \frac{\tau(t+1)}{t}  \right) &
 \displaystyle \qquad \frac{2\sqrt{2}}{1+\sqrt{2}}-1 \leq t \leq 1,
 \end{array} \right.
$$
and consequently, $\C_{2,\infty}= \C_{2,\infty}^1 \cup \C_{2,\infty}^2$. See Figure \ref{dib5}.

\begin{figure}[ht]
\begin{center}
\includegraphics[width=13cm,height=4.2cm]{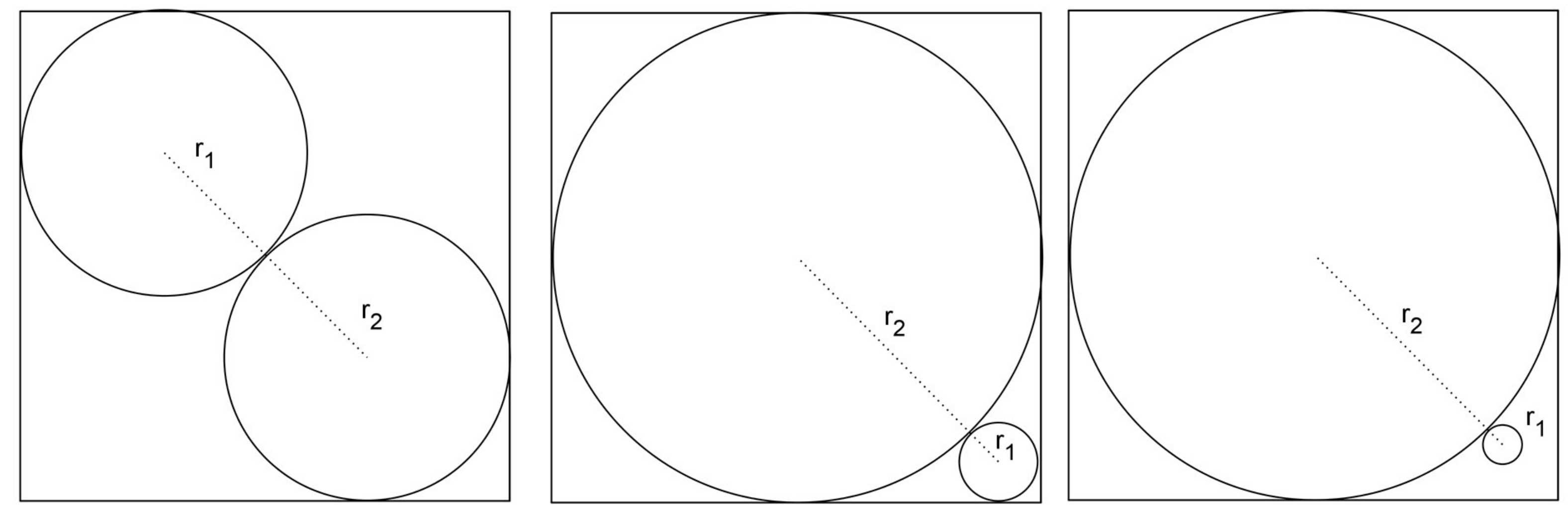}
\caption{ Partitions corresponding to $\Omega$ the unit cube in $\R^2$ for $t=1$, i.e., $r_1=r_2=\frac{\sqrt{2}}{2(1+\sqrt{2})}$ (left); $t=\frac{2\sqrt{2}}{1+\sqrt{2}}$, i.e., $r_2=\frac{1}{2}$, $r_1=\frac{\sqrt{2}}{1+\sqrt{2}}-\frac{1}{2}$ (middle) and $t=0.16$ with $r_2=\frac{2\sqrt{2}}{1+\sqrt{2}}$ and $r_1=0.08$ (right) }
\label{dib5}
\end{center}
\end{figure}

It is remarkable to see that for values of $t$ in the range $[0,\frac{2\sqrt{2}}{1+\sqrt{2}}-1]$, the curve $C_{2,\infty}$ is contained in the trivial lines, i.e., the first intersection among the second nontrivial curve and the trivial lines occurs at $(\tau_0,2)$ and $(2,\tau_0)$, where $\tau_0=\frac{2(\sqrt{2}+1)}{\sqrt{2}-1}$. See Figure \ref{dib4}.
\end{example}

\begin{figure}[ht]
\begin{center}
\includegraphics[width=7cm,height=5.3cm]{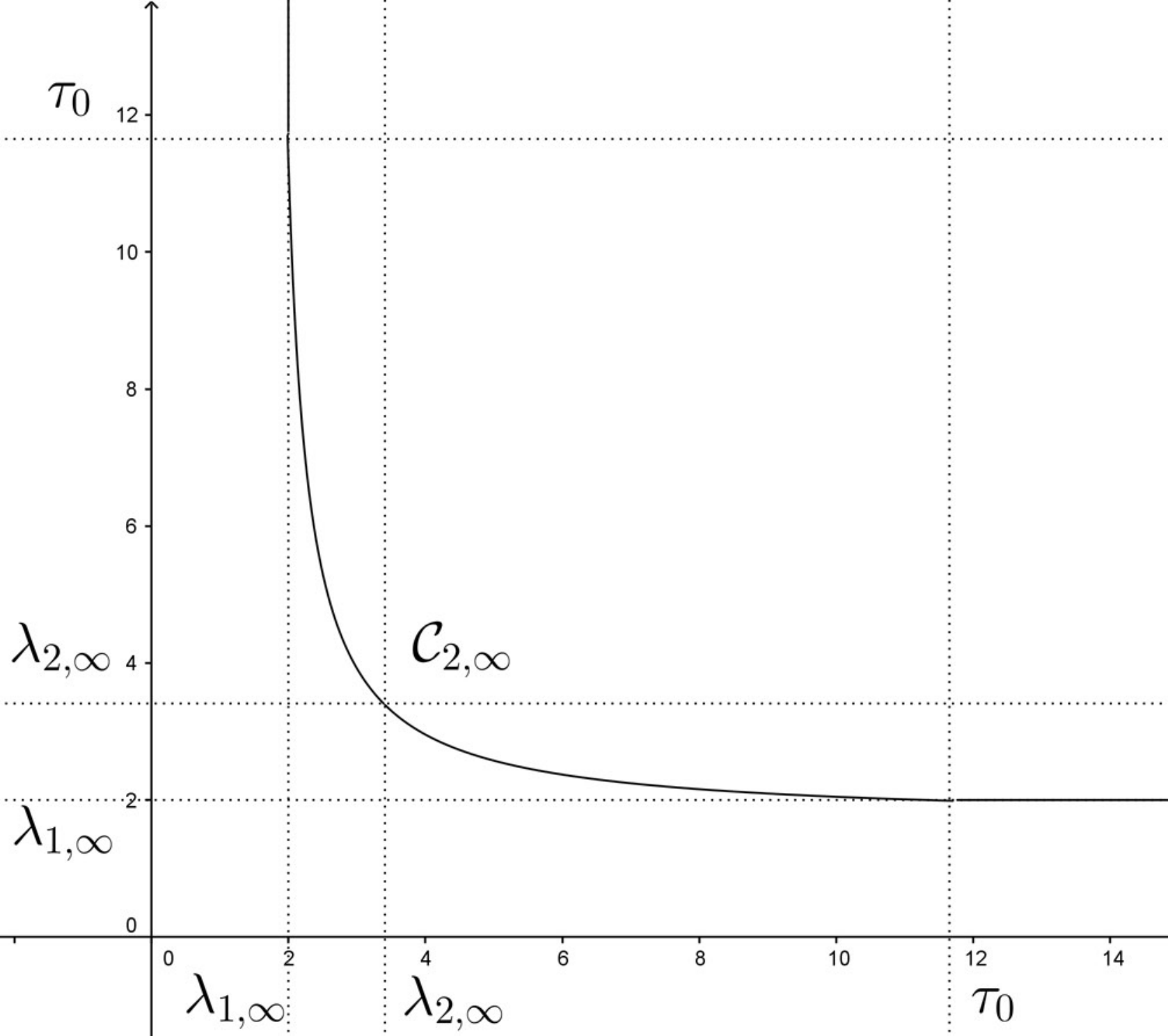}
\caption{ The curve $C_{2,\infty}$ for the unit cube in $\R^2$. }
\label{dib4}
\end{center}
\end{figure}

\vspace{2cm}

{\bf Acknowledgments}
This work has been partially supported by Consejo Nacional de Investigaciones Cient\'{i}ficas y T\'{e}cnicas (CONICET-Argentina). JVS would like to thank the Dept. of Math. FCEyN, Universidad de Buenos Aires for providing an excellent working environment and scientific atmosphere during his Postdoctoral program.

\bibliographystyle{amsplain}

\providecommand{\bysame}{\leavevmode\hbox to3em{\hrulefill}\thinspace}
\providecommand{\MR}{\relax\ifhmode\unskip\space\fi MR }
\providecommand{\MRhref}[2]{%
  \href{http://www.ams.org/mathscinet-getitem?mr=#1}{#2}
}
\providecommand{\href}[2]{#2}

\end{document}